\newtheorem{theorem}{Theorem}[section]
\newtheorem{lemma}{Lemma}[section]
\newtheorem{proposition}{Proposition}[section]
\newtheorem{remark}{Remark}[section]
\newtheorem{definition}{Definition}[section]
\numberwithin{equation}{section}
\newcommand{\mres}{\mathbin{\vrule height 1.6ex depth 0pt width
		0.13ex\vrule height 0.13ex depth 0pt width 1.3ex}}
\newcommand{\Om}{\Omega}
\newcommand{\lbd}{\lambda}
\newcommand{\R}{\mathbb{R}}
\newcommand{\N}{\mathbb{N}}
\newcommand{\Hs}{\mathcal{H}}
\newcommand{\intzt}{\int_{0}^{t}}
\newcommand{\into}{\int_{\Om}}
\newcommand{\intpo}{\int_{\partial \Om}}
\newcommand{\intot}{\int_{t_1}^{t_2}}
\newcommand{\rk}{ \right\}}
\newcommand{\lk}{ \left\{}
\newcommand{\Div}{ {\rm div}}
\newcommand{\Q}{\mathcal{Q}}
\newcommand{\raw}{\rightarrow}
\newcommand{\M}{\mathbb{M}}
\newcommand{\sym}{{\rm sym}}
\newcommand{\res}{\mathbin{\vrule height 1.6ex depth 0pt width 0.13ex\vrule height 0.13ex depth 0pt width 1.3ex}}
\begin{document}
\title[Mixed boundary conditions as limits of dissipative boundary conditions]{Mixed boundary conditions as limits of dissipative boundary conditions in dynamic perfect plasticity}
	
\author[J.-F. Babadjian]{Jean-Fran\c cois Babadjian}
\address[Jean-Fran\c cois Babadjian]{Universit\'e Paris--Saclay, Laboratoire de Math\'ematiques d'Orsay, 91405, Orsay, France}
\email{jean-francois.babadjian@universite-paris-saclay.fr}

\author[R. Llerena]{Randy Llerena}
\address[Randy Llerena]{Research Platform MMM ``Mathematics-Magnetism-Materials" - Fak. Mathematik Univ. Wien, A1090 Vienna}
\email{randy.llerena@univie.ac.at}

\date{\today} 
\begin{abstract}
This paper addresses the well posedness of a dynamical model of perfect plasticity with mixed boundary conditions for general closed and convex elasticity sets. The proof relies on an asymptotic analysis of the solution of a perfect plasticity model with relaxed dissipative boundary conditions obtained in \cite{BC}. One of the main issues consists in extending the measure theoretic duality pairing between stresses and plastic strains, as well as a convexity inequality to a more general context where deviatoric stresses are not necessarily bounded. Complete answers are given in the pure Dirichlet and pure Neumann cases. For general mixed boundary conditions, partial answers are given in dimension $2$ and $3$ under additional geometric hypothesis on the elasticity set and the reference configuration.
\end{abstract}
	
\subjclass[2010]{74C10, 35Q74, 49J45, 49Q20, 35F31}
\keywords{Elasto-plasticity, Boundary conditions, Convex analysis, Functionals of measures, Functions of bounded deformation, Calculus of variations, Dynamic evolution}
	
\maketitle
	
%\tableofcontents
	
\section{Introduction}
Elasto-plasticity is a classical theory of continuum mechanics \cite{H,Lj} that predicts the appearance of permanent deformations in materials when an internal critical stress is reached. At the atomistic level, these plastic deformations occur  when the crystal lattice of the atoms are misaligned due to the accumulation of slips defects, called dislocations. These dislocations determine the change of behavior of a body from an elastic and reversible state to a plastic and irreversible one.
		
At the continuum level, and in the context of small deformations, the theory involves the {\it displacement field} $u:\Omega \times (0,T) \to \R^n$ and the {\it Cauchy stress tensor} $\sigma: \Omega \times (0,T) \to \M^n_\sym$, both defined on the reference configuration $\Omega$ of the body, a bounded open subset of $\R^n$ ($n=2,3$). They first satisfy the {\it equation of motion}
\begin{equation}
\ddot{u} - \Div \sigma = f \quad \text{ in }\Om \times (0,T), \label{eq:eqmotion}
\end{equation}
for some (given) external body load $f : \Omega \times (0,T) \to \R^n$. In the previous expression, and in the sequel, the dot stands for the partial derivative with respect to time. One particular feature of perfect plasticity is that the stress tensor is constrained to take its values into a fixed closed and convex set $\mathbf K$ of the space $\M^n_\sym$ of symmetric $n \times n$ matrices, also called {\it elasticity set}:
\begin{equation}
\label{eq:stressconstraint}
\sigma \in {\bf K}.
\end{equation}
In classical elasticity, the linearized strain is purely elastic and it is represented by the symmetric part of the gradient of displacement, i.e.  $Eu:=(Du+Du^T)/2$. In perfect elasto-plasticity, the elastic strain $e:\Omega \times (0,T) \to \M^n_\sym$ only represents a part of the linearized strain $Eu$. It stands for the reversible part of the total deformation and it is related to $\sigma$ by means of {\it Hooke's law}, which we assume to be isotropic:
\begin{equation}\label{eq:Hooke}
\sigma=\mathbf A e=\lambda (\tr e) {\rm Id} + 2\mu e,
\end{equation}
for some constants $(\lambda,\mu) \in \R^2$, called Lam\'e coefficients, which satisfy the ellipticity conditions $\mu>0$ and $n\lambda+2\mu>0$. The remaining part of the strain, 
\begin{equation}\label{eq:additive}
p:=Eu-e
\end{equation}
stands for the plastic strain leading to irreversible deformations. It is a new unknown of the problem whose evolution is described by means of a {\it flow rule}. Assuming that $\mathbf K$ has nonempty interior, it stipulates that if $\sigma$ belongs to the interior of $\mathbf K$, then the material behaves elastically and no additional inelastic strains are created, i.e. $\dot p=0$. On the other hand, if $\sigma$ reaches the boundary of $\mathbf K$, then $\dot p$ may develop in such a way that a non–trivial permanent plastic strain $p$ may remain after unloading. The evolution of $p$ is described by the {\it Prandtl-Reuss law} 
$$\dot p \in N_{\mathbf K}(\sigma),$$
where $N_{\mathbf K}(\sigma)$ stands for  the normal cone to $\mathbf K$ at $\sigma$, or equivalently, thanks to convex analysis, by  {\it Hill’s principle of maximum plastic work}
\begin{equation}
H(\dot p) = \sigma : \dot p \label{eq:hillslaw},
\end{equation}
where $H(q) := \sup_{\tau \in {\bf K}} \tau : q$ is the support function ${\bf K}$. The system \eqref{eq:eqmotion}--\eqref{eq:hillslaw} has to be supplemented by initial conditions
\begin{equation}\label{eq:initialcondition}
(u(0), \dot{u}(0), e(0), p(0)) = (u_0, v_0,e_0,p_0)
\end{equation} 
as well as suitable boundary conditions to be discussed later, and which will be one of the main focus of this work.

For most of metals and alloys, standard models of perfect plasticity involve elasticity sets $\mathbf K$ which are invariant in the direction of hydrostatic matrices (multiples of the identity) and bounded in the direction of deviatoric (trace free) ones. This is for example the case of the Von Mises and Tresca models (see e.g. \cite{A1,AG,Tem85} in the static case,  \cite{A2,FG,Suquet,DMDSM} in the quasi-static case and \cite{AL,DS} in the dynamic one). In other situations like in the context of soils mechanics, it is of importance to consider elasticity sets $\mathbf K$ that are not necessarily invariant with respect to hydrostatic matrices. So called Drucker-Prager or Mohr-Coulomb models fall within this framework (see \cite{BC,BMo,BMo2}). In this paper, we treat as utmost as possible the case of a general elasticity set ${\bf K}$.

Let us now discuss the boundary conditions. Having in mind that the system of dynamic elasto-plasticity described so far has a hyperbolic nature, one has to consider boundary conditions compatible with this hyperbolic structure, in particular, with the finite speed propagation of the initial data along the characteristic lines. A general approach to this type of initial--boundary value constrained hyperbolic systems has been studied in \cite{DMS} (see also \cite{DLS}) where a class of so-called admissible dissipative boundary conditions has been introduced. This problem has subsequently been specified to the case of plasticity, first in \cite{BM} for a simplified scalar model, and then in \cite{BC} for the general vectorial model as described before. In this context, all admissible (homogeneous) {\it dissipative boundary conditions} take the form (see \cite[Section 3]{BC})
\begin{equation}
\label{eq:dissipativebdr}
S \dot u + \sigma \nu = 0 \quad \text{ on }\partial\Omega \times (0,T),
\end{equation}
where $\nu$ denotes the outer unit normal to $\Om$, and $S:\partial \Omega \to \mathbb{M}^n_\sym$ is a  spatially dependent positive definite boundary matrix. The well posedness of the initial--boundary value system \eqref{eq:eqmotion}--\eqref{eq:dissipativebdr} has been carried out in \cite{BC}. It has been established existence and uniqueness of two equivalent notions of relaxed solutions (variational and entropic solutions).  The relaxation phenomena is a simple consequence of the fact that, formally, the stress contraint \eqref{eq:stressconstraint} might not be compatible with the boundary condition \eqref{eq:dissipativebdr}. Indeed, if $\sigma(t) \in \mathbf K$ in $\Omega$, we would expect that $\sigma(t)\nu \in \mathbf K\nu$ on $\partial\Omega$ while $\sigma(t)\nu=-S \dot u(t)$ is free on the boundary. Thus, the boundary condition and the stress constraint have to accomodate to each other and the dissipative boundary condition \eqref{eq:dissipativebdr} has to be relaxed into 
\begin{equation}
\label{eq:relaxedBC}
P_{-\mathbf K\nu}(S \dot u)+\sigma\nu=0 \quad \text{ on }\partial\Omega \times (0,T),
\end{equation}
where, for $x \in \partial\Om$, $P_{-\mathbf K\nu(x)}$ stands for the orthogonal projection in $\R^n$ onto the convex set $-\mathbf K \nu(x)$ with respect to a suitable scalar product. This is indeed a relaxation in the sense of the Calculus of Variations, because the energy balance involves a term of the form
$$\int_\Omega H(\dot p)\, dx + \frac12 \int_{\partial\Omega} S \dot u\cdot \dot u\, d\mathcal H^{n-1} + \frac12 \int_{\partial\Omega}S^{-1}(\sigma\nu)\cdot (\sigma\nu)\, d\mathcal H^{n-1}.$$
The previous energy functional turns out of not being lower semicontinuous with respect to weak convergence in the energy space, and its relaxation with respect to this topology is explicitly given by
$$\int_\Omega H(\dot p)\, dx +  \int_{\partial\Omega} \psi(x,\dot u)\, d\mathcal H^{n-1} + \frac12 \int_{\partial\Omega}S^{-1}(\sigma\nu)\cdot (\sigma\nu)\, d\mathcal H^{n-1},$$
where $\psi(x,\cdot)$ is the inf-convolution of the functions $z \mapsto \frac12 S(x)z \cdot z$ and $z \mapsto H(-z\odot \nu(x))$. The connexion between the relaxed energy and the modified boundary condition \eqref{eq:relaxedBC} comes from a first order minimality condition and the following formula (see \cite[Section 4]{BC})
$$D_z\psi(x,\dot u(t,x))= P_{-\mathbf K\nu(x)}(S(x)\dot u(t,x)).$$

Unfortunately, Dirichlet, Neumann and mixed boundary conditions are not admissible because the matrix $S$ is not allowed to vanish nor to take the value $\infty$. It is the main focus of the present work to show that these type of natural boundary conditions can actually be obtained by means of an asymptotic analysis letting $S \to \infty$ in a portion of the boundary where we want to recover a Dirichlet  condition, and letting $S \to 0$ on the complementary part where one wishes to formulate a Neumann condition. This type of analysis has already been performed in \cite{BM} in the simplified case of antiplane scalar plasticity where pure Dirichlet and pure Neumann boundary conditions have been derived. We extend here this analysis to the general vectorial case where additional issues arise, and to the case of mixed boundary conditions.

To be more precise, in the spirit of \cite{FG,KT, DMDSM}, we partition $\partial \Om$ into the disjoint union of $\Gamma_D, \Gamma_N$ and $\Sigma$, where $\Gamma_D$ and $\Gamma_N$ stand for the Dirichlet and Neumann parts of the boundary, respectively, and  $\Sigma$ is the interface between $\Gamma_D$ and $\Gamma_N$ which is $\mathcal H^{n-1}$-negligible. We consider a boundary matrix of the form
\begin{equation}\label{eq:Slambda}
S_\lambda(x) := \left( \lbd \textbf{1}_{\Gamma_D} + \frac{1}{\lbd} \textbf{1}_{\Gamma_N} \right)\textrm{Id}
\end{equation}
for some parameter $\lbd >0$ which will be sent to $\infty$. Denoting by $(u_\lambda,e_\lambda,p_\lambda,\sigma_\lambda)$ the unique weak solutions of the system \eqref{eq:eqmotion}--\eqref{eq:initialcondition} with the relaxed dissipative boundary condition \eqref{eq:relaxedBC} associated to the boundary matrix $S_\lambda$, using the results of \cite{BC}, we easily derive bounds in the energy space for this quadruple, which allow one to get weak limits $(u,e,p,\sigma)$ and pass to the limit into the equation of motion \eqref{eq:eqmotion}, the stress constraint \eqref{eq:stressconstraint}, Hooke's law \eqref{eq:Hooke}, the additive decomposition \eqref{eq:additive} and the initial condition \eqref{eq:initialcondition}. This is the object of Lemma \ref{lem:compactness}. As usual in plasticity, the main difficulty consists in passing to the limit in the flow rule expressed by \eqref{eq:hillslaw} and in the relaxed boundary condition \eqref{eq:relaxedBC}. In accordance with \cite{BC,BM,BMo}, the idea consists in taking the limit as $\lambda\to\infty$ into the energy balance. The main difficulty is concerned with the term
$$\int_\Omega H(\dot p_\lambda)\, dx +  \int_{\partial\Omega} \psi_\lambda(x,\dot u_\lambda)\, d\mathcal H^{n-1} + \frac12 \int_{\partial\Omega}S_\lambda^{-1}(\sigma_\lambda\nu)\cdot (\sigma_\lambda\nu)\, d\mathcal H^{n-1},$$
where
$$\psi_\lbd (x,z): = \inf_{w \in \R^n}{\left\{ \frac{1}{2} \left( \lbd {\bf 1}_{\Gamma_D} + \frac 1 \lbd {\bf 1}_{\Gamma_N} \right)|w|^2  + H((w- z) \odot \nu (x) ) \right\} }.$$
A uniform bound on the previous energy easily shows that
$$\int_{\Gamma_N} |\sigma_\lambda\nu|^2\, d\mathcal H^{n-1} \leq \frac{C}{\lambda} \to 0, \quad \text{ as }\lambda \to \infty,$$
which leads to the Neumann boundary condition $\sigma\nu=0$ on $\Gamma_N$. The obtention of the Dirichlet boundary condition on $\Gamma_D$ is more involved because, as usual in perfect plasticity, concentration phenomena might occur. A convex analysis argument based on the Moreau-Yosida approximation of $H$ yields the following lower bound on the energy (see Lemma \ref{prop:relaxationdirichletpart})
$$\int_{\Om}H( \dot p)\, dx +\int_{{\Gamma_D}}H(- \dot u \odot \nu)\, d\mathcal H^{n-1}\le \liminf_{\lbd \rightarrow \infty}{\left( \int_\Om H( \dot p_\lambda)\, dx + \int_{\partial \Om}{\psi_\lbd(x, \dot u_\lbd)\, d \Hs^{n-1}} \right)}.$$
Proving that this lower bound is also an upper bound is formally a consequence the convexity inequality 
$$H(\dot p) \geq \sigma:\dot p$$ 
(because $\sigma \in \mathbf K$), and integrations by parts in space and time. Unfortunately, this formal convexity inequality is very difficult to justify in the context of perfect plasticity because the Cauchy stress $\sigma$ and the plastic strain rate $\dot p$ are not in duality. Indeed, the natural energy space gives $\sigma(t) \in H(\Div,\Om)$ while $\dot p((t) \in \mathcal M(\Om \cup \Gamma_D;\M^n_\sym)$ since the support function $H$ grows linearly with respect to its argument. In particular, the plastic dissipation
$$\int_\Om H(\dot p(t))\, dx$$
has to be understood as a convex function of a measure (see \cite{DT1,DT2,GS}). Whenever the quadruple $(u,e,p,\sigma)$ belongs to the energy space, it follows that $(\dot u(t),\dot e(t),\dot p(t))$ belongs to the space of all kinematically admissible triples
\begin{multline*}
 \Big\{(v,\eta,q) \in [BD(\Om) \cap L^2(\Om;\R^n)] \times L^2(\Om;\mathbb M^n_{\rm sym}) \times \mathcal M(\Om \cup \Gamma_D ;\mathbb M^n_{\rm sym}) : \\
 Ev=\eta+q \text{ in }\Om, \quad q=-v\odot \nu\mathcal H^{n-1} \text{ on }\Gamma_D\Big\},
\end{multline*}
and $\sigma(t)$ belongs to the space of all statically and plastically admissible stresses
$$\{\tau \in H(\Div,\Om): \; \tau\nu=0 \text{ on }\Gamma_N, \; \tau(x) \in \mathbf K \text{ a.e. in }\Om\}.$$
In the spirit of \cite{ FG,KT, DMDSM}, it allows one to consider a generalized stress/strain duality (see Definition \ref{definition:dualityMixed}) as the first order distribution $[\sigma(t) \colon \dot p(t)] \in \mathcal{D}' (\R^n)$, compactly supported in $\overline\Om$, defined as
\begin{equation}
\label{eq:introstrain}
\langle [\sigma (t)\colon \dot p(t)], \varphi \rangle = -\int_\Omega \varphi \sigma(t) : \dot e(t) \, dx- \int_\Omega \dot u(t) \cdot {\rm div} \sigma(t) \, \varphi \, dx - \int_\Omega \sigma(t) \colon \big(u(t) \odot \nabla \varphi\big) \, dx
\end{equation}
or any $\varphi \in C^\infty_c (\R^n)$. The question now reduces to prove that 
\begin{equation}
\label{eq:introconvexinea}
H(\dot p(t)) \ge [\sigma(t) \colon \dot p(t)] \quad \textrm{in } \mathcal M (\R^n),
\end{equation}
and this is the object of Section \ref{sec:duality}	. In Propositions \ref{prop:an1} we show that this generalized convexity inequality is always satisfied in the pure Dirichlet ($\Gamma_D=\partial\Om$) and pure Neumann ($\Gamma_N=\partial\Om$) cases. In the case of mixed boundary conditions, there might be some concentration effects at the interface $\Sigma$ between the Dirichlet and the Neumann parts, and the previous convexity inequality is shown to hold only in $\mathcal M (\R^n \setminus \Sigma)$ in Proposition  \ref{prop:convexinequalityMixed}. Unfortunately, this weaker result is not enough to conclude the energy  upper bound because, although $\Sigma$ is $\mathcal H^{n-1}$-negligible, some undesirable energy concentration might accumulate on that set. We further exhibit special cases in dimensions $n=2$ and $n=3$ which guarantee the validity of \eqref{eq:introconvexinea} also in the case of mixed boundary conditions (see Propositions \ref{prop:n=2} and \ref{prop:n=3}). In dimension $n=2$, it is enough to assume that $\Sigma$ is a finite set (as in \cite{FG}) while in dimension $n=3$, we suppose that the convex set $\mathbf K$ is invariant in the direction of hydrostatic matrices and bounded in the direction of deviatoric ones, as well as additional regularity assumptions on the reference configuration $\Om$ (as in \cite{KT}).
			
To conclude this introduction, let us mention that our method only allows one to derive homogeneous mixed boundary conditions. Indeed, at a formal level, even starting from a nonhomogeneous dissipative boundary condition of the form $S\dot u+\sigma\nu=g$ on $\partial\Om \times (0,T)$, for some non trivial source term $g$, (or its relaxed counterpart $P_{-\mathbf K\nu}(S\dot u-g)+\sigma\nu=0$ on $\partial\Om \times (0,T)$ given by an adaptation of \cite{BC}), we obtain an energy balance involving the following additional term
$$\int_0^T \int_{\partial \Om}S^{-1} g \cdot g \, d \Hs^{n-1}\, dt .$$
Specializing the problem to a boundary matrix $S=S_\lambda$ of the form \eqref{eq:Slambda} and some $\lambda$-dependent source term $g_\lambda \in   L^2 (\partial \Om \times (0,T); \R^n)$, the previous discussion shows that a uniform bound on the solution $(u_\lambda,e_\lambda,p_\lambda,\sigma_\lambda)$ in the energy space would require that 
$$\sup_{\lambda>0} \left\{\frac{1}{\lambda} \int_{\Gamma_D} |g_\lambda|^2\, d\mathcal H^{n-1} + \lambda \int_{\Gamma_N} |g_\lambda|^2\, d\mathcal H^{n-1} \right\}<\infty.$$
It would imply that 
$$\sigma_\lambda\nu=g_\lambda-\lambda^{-1} \dot u_\lambda \to 0 \quad \text{ in }\Gamma_N \times (0,T)$$ in a weak sense as $\lambda \to \infty$ (because the trace of $\dot u_\lambda$ is bounded in $L^1(\partial\Om \times (0,T);\R^n)$), leading to a homogenous Neumann condition in $\Gamma_N$. Concerning the Dirichlet part, formally reporting this information in the  dissipative boundary condition restricted to $\Gamma_D$ would lead to
$$\dot u_\lambda = \lambda^{-1}g_\lambda-\lambda^{-1}\sigma_\lambda\nu \to 0 \quad \text{ in }\Gamma_D \times (0,T),$$
in some weak sense 	as $\lambda \to \infty$ (because $\sigma_\lambda\nu$ is bounded in $L^2(0,T;H^{-1/2}(\partial\Om;\R^n))$), leading to a homogeneous Dirichlet boundary condition. Strictly speaking one should rather consider the relaxed boundary condition which would lead to a strain concentration on $\Gamma_D$ associated to a homogeneous Dirichlet boundary condition.			
			
The paper is organized as follows. In Section 2, we introduce various notation and basic facts used throughout this paper. In  Section 3, we discuss the notion duality between plastic strains and Cauchy stresses, and we prove generalized convexity inequalities of the form \eqref{eq:introconvexinea} involving these two arguments which are not in duality in the energy space. Finally, in Section 4, we state and prove our main result, Theorem \ref{thm:compactness}, about the convergence of the solutions obtained in \cite{BC} to the (unique) solution of a dynamical elasto-plastic model with homogeneous mixed boundary conditions.

\section{Notation and preliminaries}
	
\subsection{Linear algebra}
	
If $a$ and $b \in \R^n$, we write $a \cdot b:=\sum_{i=1}^n a_i b_i$ for the Euclidean scalar product, and we denote by $|a|:=\sqrt{a \cdot a}$ the corresponding norm. 
	
We denote by $\mathbb M^n$ the set of $n \times n$ matrices and by $\mathbb M^{n}_{\rm sym}$ the space of symmetric $n \times n$ matrices. The set of all (deviatoric) trace free symmetric matrices will be denoted as $\mathbb M^{n}_{D}$. The space $\mathbb M^n$ is endowed with the Fr\"obenius scalar product $A:B:=\tr(A^T B)$ and with the corresponding Fr\"obenius norm $|A|:=\sqrt{A:A}$. If $a \in \R^n$ and $b \in \R^n$, we denote by $a \odot b:=(ab^T+b^T a)/2 \in  \mathbb M^{n}_{\rm sym}$ there symmetric tensor product.
	
If $A \in \mathbb M^{n}_{\rm sym}$, there exists an orthogonal decomposition of $A$ with respect to the Fr\"obenius scalar product as follows
$$ A = A_D + \frac{1}{n} (\tr A){\rm Id} ,$$
where $A_D \in \mathbb M^n_D$ stands for the deviatoric part of $A$. 
	
\subsection{Measures}

The Lebesgue measure in $\R^n$ is denoted by $\mathcal L^n$, and the $(n-1)$-dimensional Hausdorff measure by $\mathcal H^{n-1}$. If $X \subset \R^n$ is a Borel set and $Y$ is an Euclidean space, we denote by $\mathcal M(X;Y)$ the space of $Y$-valued bounded Radon measures in $X$ endowed with the norm $\|\mu\|:=|\mu|(X)$, where $|\mu|$ is the variation of the measure $\mu$. If $Y=\R$ we simply write $\mathcal M(X)$ instead of $\mathcal M(X;\R)$.
	
If the relative topology of $X$ is locally compact, by Riesz representation theorem, $\mathcal M(X;Y)$ can be identified with the dual space of $C_0(X;Y)$, the space of continuous functions $\varphi:X \to Y$ such that $\{|\varphi|\geq \varepsilon\}$ is compact for every $\varepsilon>0$. The (vague) weak* topology of $\mathcal M(X;Y)$ is defined using this duality. 
	
Let $\mu \in \mathcal M(X;Y)$ and $f:Y \to [0,+\infty]$ be a convex, positively one-homogeneous function.  Using the theory of convex functions of measures developed in \cite{DT1,DT2, GS}, we introduce the nonnegative {Borel measure $f(\mu)$}, defined by 
$$f(\mu)=f\left(\frac{d \mu}{d |\mu|}\right)|\mu|\,,$$
where $\frac{d \mu}{d |\mu|}$ stands for the Radon-Nikod\'ym derivative of $\mu$ with respect to $|\mu|$.
	
\subsection{Functional spaces}
	
We use standard notation for Lebesgue spaces ($L^p$) and Sobolev spaces ($W^{s,p}$ and $H^s=W^{s,2}$).
	
The space of functions of bounded deformation is defined by
$$BD(\Omega)=\{u \in L^1(\Omega;\R^n) : \; E u \in \mathcal M(\Omega;\mathbb M^n_{\rm sym})\}\,,$$
where $E u:=(Du+Du^T)/2$ stands for the distributional symmetric gradient of $u$. We recall (see \cite{Bab, Tem85}) that, if $\Omega$ has a Lipschitz boundary, every function $u \in BD(\Omega)$ admits a trace, still denoted by $u$, which belongs to $L^1(\partial\Omega;\R^n)$, and such that the integration by parts formula holds: for all $\varphi \in C^1(\overline \Omega;\mathbb M^n_{\rm sym})$,
$$\int_{\partial\Omega} u\cdot (\varphi\nu)\, d\mathcal H^{n-1}=\int_\Omega {\rm div} \varphi \cdot u\, dx + \int_\Omega \varphi:d E u\,.$$
Note that the trace operator is continuous with respect to the strong convergence of $BD(\Omega)$ but not with respect to the weak* convergence in $BD(\Omega)$.
	
Let us define 
$$H({\rm div},\Omega)=\{\sigma \in L^2(\Omega;\mathbb M^n_{\rm sym}) :\;  {\rm div} \sigma \in L^2(\Omega;\R^n)\}\,.$$
If $\Omega$ has Lipschitz boundary, for any $\sigma \in H(\mathrm{div}, \Omega)$ we can define the normal trace $\sigma\nu$ as an element of $H^{-\frac12}(\partial \Omega;\R^n)$  (cf. e.g.\ \cite[Theorem~1.2, Chapter~1]{Tem85}) by setting	\begin{equation}\label{2911181910}
\langle \sigma \nu, \psi \rangle_{H^{-\frac12}(\partial \Omega;\R^n),H^{\frac12}(\partial \Omega;\R^n)}:= \int_\Omega \psi \cdot {\rm div} \sigma \, dx + \int_\Omega \sigma \colon  E \psi \, dx\,.
\end{equation}
for every $\psi \in H^1(\Omega;\R^n)$. 
		
\section{Duality between stress and plastic strain}\label{sec:duality}	
	
In the spirit of \cite{KT,FG,BMo}, we define a generalized notion of stress/strain duality.
	
\noindent $(H_1)$ {\bf The reference configuration.} Let $\Om \subset \R^n$ be a bounded open set with Lipschitz boundary. We assume that $\partial \Om$ is decomposed as the following disjoint union	
$$\partial \Om = \Gamma_D \cup \Gamma_N \cup \Sigma,$$
where $\Gamma_D$ and $\Gamma_N$ are open sets in the relative topology of $\partial\Om$, and $\Sigma = \partial _{| \partial \Om}\Gamma_D = \partial _{| \partial \Om}\Gamma_N$ is $\Hs^{n-1}$-negligible. 
	
On the Neumann part $\Gamma_N$, we will prescribe a surface load given by a function $g \in L^\infty(\Gamma_N;\R^n)$. The space of {\it statically admissible stresses} is defined by
$$\mathcal S_g:=\{\sigma \in H({\rm div},\Omega) : \;\sigma\nu=g \text{ on }\Gamma_N\}.$$
In the sequel we will also be interested in stresses $\sigma$ taking values in a given set.

\noindent $(H_2)$ {\bf Plastic properties.} Let $\mathbf K  \subset  \mathbb{M}^n_{{\rm sym}}$ be a closed convex set such that $0$ belongs to the interior point of ${\bf K}$. In particular, there exists $r>0$ such that
\begin{equation}
\label{eq:ms2}
\lk \tau \in \mathbb{M}^n_{{\rm sym}} : \abs{\tau} \le r \rk \subset {\bf K}.
\end{equation}
The support function $H : \mathbb{M}^n_{{\rm sym}} \raw \left[ 0, + \infty \right]$ of $\bf K$ is defined by
$$H(q) := \sup_{\sigma \in {\bf K}}{\sigma : q} \quad \textrm{for all } q \in \mathbb{M}^n_{{\rm sym}}.$$
We can deduce from (\ref{eq:ms2}) that
\begin{equation}\label{eq:coercH}
H(q) \ge r \abs{q}  \quad \textrm{for all } q \in \mathbb{M}^n_{{\rm sym}}.
\end{equation}
If $p \in \mathcal{M} (\Om \cup \Gamma_D;  \mathbb{M}^n_{{\rm sym}}),$ we denote the convex function of a measure $H(p)$ by
$$H(p) := H \left( \frac{dp}{d \abs{p}} \right) \abs{p},$$
and the plastic dissipation is defined by
$$\mathcal{H} (p) := \int_{\Om \cup \Gamma_D} H\left( \frac{dp}{d \abs{p}} \right) d\abs{p}. $$
We define the set of all {\it plastically admissible stresses} by
$$\mathcal K:=\{ \sigma \in H(\Div,\Om) \colon \, \sigma(x) \in \mathbf K \text{ for a.e.\ } x \in \Omega \}$$
which defines a closed and convex subset of $H(\Div,\Om) $.   
	
The portion $\Gamma_D$ of $\partial\Om$ stands for the Dirichlet part of the boundary where a given displacement $w$ will be prescribed. We assume that it extends into a function $w \in H^1(\Om;\R^n)$ (so that $w_{|\Gamma_D}\in H^{1/2}(\Gamma_D;\R^n)$). We define the space of {\it kinematically admissible triples} by
\begin{eqnarray*}
\mathcal A_w & := & \Big\{(u,e,p) \in [BD(\Om) \cap L^2(\Om;\R^n)] \times L^2(\Om;\mathbb M^n_{\rm sym}) \times \mathcal M(\Om \cup \Gamma_D ;\mathbb M^n_{\rm sym}) : \\
&& Eu=e+p \text{ in }\Om, \quad p=(w-u)\odot \nu\mathcal H^{n-1} \text{ on }\Gamma_D\Big\},
\end{eqnarray*}
where $\nu$ is the outer unit normal to $\Om$. The function $u$ stands for the displacement, $e$ is the elastic strain and $p$ is the plastic strain. The following result provides an approximation for triples $(u,e,p) \in \mathcal A_w$ and its proof follows the line of Step 1 in \cite[Theorem 6.2]{FG}.

\begin{lemma}
\label{lem:approximationBDkin}
Let $(u,e,p) \in [BD(\Om) \cap L^2(\Om; \R^n)] \times L^2(\Om; \M^n_\sym) \times \mathcal{M} (\Om; \M^n_\sym)$ be such that $Eu = e+ p$ in $\Om$. Then, there exists a sequence $\{(u_k,e_k, p_k)\}_{k \in \N}$ in $C^{\infty} (\overline{\Om}; \R^n \times \M^n_\sym \times \M^n_\sym) $ such that
$$E u_k = e_k + p_k \quad \text{ in }\Om,$$
\begin{equation}\label{eq:approximationuep}
\begin{cases}
u_k \raw u \quad \textrm{strongly in } L^2( \Om; \R^n), \\
e_k \raw e \quad \textrm{strongly in } L^2( \Om; \M^n_\sym), \\
p_k \rightharpoonup p \quad \textrm{weakly* in } \mathcal{M} (\Om; \M^n_\sym),\\
|p_k|(\Omega) \to |p|(\Om),\\
|Eu_k|(\Omega) \to |Eu|(\Om),\\
u_k \to u \quad \textrm{strongly in }L^1(\partial\Omega;\R^n).
\end{cases}
\end{equation}
and for all $\varphi \in C^\infty_c(\R^n)$ with $\varphi \geq 0$,
\begin{equation}\label{eq:Res}
\limsup_{k \to \infty}\int_\Om \varphi\, dH(p_k)\leq \int_\Om \varphi\, dH(p).
\end{equation}
\end{lemma}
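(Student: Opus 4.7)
The plan is an Anzellotti--Giaquinta type construction: a boundary-adapted partition of unity combined with mollifications at scales shrinking towards $\partial \Om$, arranged so that the approximations are smooth up to $\partial\Om$ and match the decomposition $E u = e+p$ by construction. More precisely, I fix an increasing exhaustion $\Om_j \subset\subset \Om_{j+1} \subset\subset \Om$, let $U_j := \Om_{j+1} \setminus \overline{\Om_{j-1}}$ for $j \ge 1$, and add a set $U_0$ covering a layer of $\Om$ adjacent to $\partial \Om$ (where a small inward translation or a local diffeomorphism is used so that the corresponding mollifications produce functions smooth on $\overline\Om$). Let $\{\theta_j\}$ be a smooth partition of unity subordinate to $\{U_j\}$. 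For scales $\varepsilon_{j,k} \to 0$ small enough that each convolved piece has support contained in a slightly enlarged $U_j$ (locally finite overlap), set
\begin{equation*}
u_k := \sum_{j} \rho_{\varepsilon_{j,k}} * (\theta_j u), \qquad p_k := \sum_{j} \rho_{\varepsilon_{j,k}} * (\theta_j p), \qquad e_k := E u_k - p_k.
\end{equation*}
The identity $E u_k = e_k + p_k$ holds by construction; all three functions lie in $C^\infty(\overline\Om)$; and expanding $E(\theta_j u) = \theta_j E u + u \odot \nabla \theta_j$ together with the cancellation $\sum_j \nabla \theta_j = 0$ gives
\begin{equation*}
e_k = \sum_j \rho_{\varepsilon_{j,k}} * (\theta_j e) + \sum_j \bigl[\rho_{\varepsilon_{j,k}} * (u \odot \nabla \theta_j) - u \odot \nabla \theta_j\bigr].
\end{equation*}

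From here standard mollifier theory yields $u_k \to u$ and $e_k \to e$ in $L^2$ (the second sum above tends to $0$ in $L^2$ for $\varepsilon_{j,k}$ small enough, since $u \in L^2$), and $p_k \rightharpoonup p$ weakly* in $\mathcal M(\Om;\M^n_\sym)$. Because $\theta_j \ge 0$, one has $|\rho_{\varepsilon_{j,k}} * (\theta_j p)|(\Om) \le \int \theta_j \, d|p|$, whence $|p_k|(\Om) \le \sum_j \int \theta_j \, d|p| = |p|(\Om)$; combined with weak* lower semicontinuity this gives $|p_k|(\Om) \to |p|(\Om)$. An analogous estimate applied to $E u_k$ (using the expression above, whose second sum is also $o(1)$ in $L^1$) yields $|E u_k|(\Om) \to |E u|(\Om)$. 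Strong $L^1$-convergence of the traces on $\partial \Om$ then follows from the classical continuity of the trace with respect to strict convergence in $BD(\Om)$.

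For \eqref{eq:Res}, I exploit that $H$ is convex, positively one-homogeneous, hence subadditive and nonnegative. Jensen's inequality applied to the probability measure $\rho_{\varepsilon_{j,k}}(x-\cdot)\theta_j \, d|p| / Z_{j,k}(x)$, together with the one-homogeneity of $H$, gives the pointwise bound
\begin{equation*}
H\bigl(\rho_{\varepsilon_{j,k}} * (\theta_j p)\bigr)(x) \le \bigl(\rho_{\varepsilon_{j,k}} * (\theta_j H(p))\bigr)(x),
\end{equation*}
and subadditivity then yields $H(p_k) \le \sum_j \rho_{\varepsilon_{j,k}} * (\theta_j H(p))$ pointwise in $\Om$. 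Testing against $\varphi \ge 0$ and using the uniform convergence $\rho_{\varepsilon_{j,k}} * \varphi \to \varphi$ together with $\sum_j \theta_j = 1$ produces
\begin{equation*}
\limsup_{k\to\infty} \int_\Om \varphi \, d H(p_k) \le \int_\Om \varphi \, d H(p).
\end{equation*}

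The main obstacle is to secure simultaneous $C^\infty(\overline\Om)$ regularity of the approximations and strict convergence in $BD(\Om)$: this forces a careful treatment of the partition piece adjacent to $\partial \Om$ via translation or local coordinates so that the mollifications do not create loss of mass on the boundary, which is precisely the content of Step~1 in \cite[Theorem~6.2]{FG}.
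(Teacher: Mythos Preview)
Your proposal is correct and follows essentially the same route as the paper: the paper simply invokes Step~1 of \cite[Theorem~6.2]{FG} for the construction and the first five convergences, cites \cite[Proposition~3.4]{Bab} for the trace convergence under strict $BD$-convergence, and obtains \eqref{eq:Res} from the subadditivity and positive one-homogeneity of $H$ --- exactly the Anzellotti--Giaquinta partition-of-unity/mollification scheme you have spelled out, together with the Jensen-type pointwise bound $H(\rho_\varepsilon * (\theta_j p)) \le \rho_\varepsilon * (\theta_j H(p))$. Your write-up is in fact more explicit than the paper's own proof, which is largely a pointer to the cited references.
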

	
\begin{proof}
The construction of a sequence $\{(u_k,e_k, p_k)\}_{k \in \N}$ in $C^{\infty} (\overline{\Om}; \R^n \times \M^n_\sym \times \M^n_\sym) $ such that $E u_k = e_k + p_k$ in $\Om$ together with the four first convergences of \eqref{eq:approximationuep} result from Step 1 in \cite[Theorem 6.2]{FG}. Moreover, a careful inspection of that proof also shows that $|Eu_k|(\Omega) \to |Eu|(\Om)$. The strong convergence of the trace in $L^1(\partial\Omega;\R^n)$ is a consequence of \cite[Proposition 3.4]{Bab}. The last condition \eqref{eq:Res} follows as well from the proof of \cite[Theorem 6.2]{FG} using the subadditivity and the positive one-homogeneity of $H$. Note that \eqref{eq:Res} cannot be directly obtained from the strict convergence of $\{p_k\}_{k \in \mathbb N}$ and Reshetnyak continuity Theorem (see \cite[Theorem 2.39]{AFP} or \cite{S}) because $H$ is just lower semicontinuous and it can take infinite values.
\end{proof}
	
We now define a distributional duality pairing between statically admissible stresses and plastic strains.	
	
\begin{definition}
\label{definition:dualityMixed}
Let $\sigma \in \mathcal S_g$ and $(u,e,p)\in \mathcal A_w$. We define the first order distribution 
$[\sigma \colon p]\in \mathcal D'(\R^n)$ by
$$\langle [\sigma \colon p], \varphi \rangle :=\int_\Omega \varphi \sigma : (E w-e)\, dx+\int_\Omega (w-u) \cdot {\rm div} \sigma \, \varphi \, dx + \int_\Omega \sigma \colon \big((w-u) \odot \nabla \varphi\big) \, dx + \int_{\Gamma_N}{\varphi g \cdot u \, d \Hs^{n-1} }$$
for all $\varphi \in C^\infty_c(\R^n)$.
\end{definition}
	
\begin{remark}
{\rm If $\varphi \in C^\infty_c(\Omega)$, thanks to the integration by parts formula in $H^1(\Omega;\R^n)$, the expression of the stress/strain duality becomes independent of $w$ and $g$, and it reduces to
\begin{equation}\label{1410191537}
\langle [\sigma \colon p], \varphi \rangle= -\int_\Omega \varphi \sigma \colon e \,dx- \int_\Omega u\cdot {\rm div} \sigma \, \varphi \,dx - \int_\Omega \sigma \colon (u\odot \nabla \varphi) \, dx\,.
\end{equation}
}
\end{remark}

As already observed in \cite{BMo}, contrary to \cite{FG,KT}, we are not able to show in general that $[\sigma:p]$ extends into a bounded Radon measure. This is due to the fact that, in our context, $\sigma_D$ fails to belong to $L^\infty(\Om;\M^n_D)$. However, provided $\sigma \in \mathcal K$ and under suitable assumption on $\Om$ and $\mathbf K$, we are going to show a convexity inequality which will ensure that $H(p)-[\sigma:p]$ is a nonnegative distribution, hence that $[\sigma:p]$ actually defines a bounded Radon measure supported in $\overline\Om$. 
	
\subsection{Pure Dirichlet or pure Neumann boundary conditions}
	
As the following result shows, the distribution $[\sigma:p]$ always extends into a bounded Radon measure in the pure Dirichlet or pure Neumann cases.
\begin{proposition}
\label{prop:an1}
Let $\Om \subset \R^n$ be a bounded open set with Lipschitz boundary. Assume that either $\partial\Om=\Gamma_D$ or $\partial\Om=\Gamma_N$. Then, for every $\sigma \in \mathcal S_g \cap \mathcal K$ and $(u,e,p) \in \mathcal A_w$ with $H(p) \in \mathcal M(\Om \cup \Gamma_D)$, the distribution $[\sigma:p]$ extends to a bounded Radon measure supported in $\overline \Om$ and 
\begin{equation}
\label{eq:conv-ineqNeumann}
H(p) \ge \left[ \sigma : p \right] \quad {{\rm in }} \ \mathcal{M}(\R^n).
\end{equation}
\end{proposition}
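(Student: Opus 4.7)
The plan is to prove the distributional inequality $H(p) - [\sigma:p] \ge 0$ on $\R^n$, which automatically upgrades to a positive bounded Radon measure dominated by $H(p)$, giving both assertions of the proposition at once. The engine is the pointwise bound $\sigma(x):q \le H(q)$ (valid for every $q \in \M^n_\sym$ since $\sigma(x) \in \mathbf K$ a.e.) combined with the smooth approximants furnished by Lemma~\ref{lem:approximationBDkin}. Fix $\varphi \in C^\infty_c(\R^n)$ with $\varphi \ge 0$ and let $(u_k, e_k, p_k)$ be such approximants. Substituting them into Definition~\ref{definition:dualityMixed}, rewriting $\sigma:(Ew-e_k) = \sigma:E(w-u_k) + \sigma:p_k$ thanks to $Eu_k = e_k + p_k$, and integrating the resulting $\sigma:E(\varphi(w-u_k))$ term by parts via \eqref{2911181910} with test function $\varphi(w-u_k) \in H^1(\Om;\R^n)$, yields the identity
\begin{equation*}
\langle [\sigma : p_k], \varphi\rangle = \int_\Om \varphi\,\sigma:p_k\,dx + \langle \sigma\nu,\,\varphi(w-u_k)\rangle_{H^{-\frac12},H^{\frac12}} + \int_{\Gamma_N}\varphi g\cdot u_k\,d\Hs^{n-1}.
\end{equation*}
The left-hand side converges to $\langle[\sigma:p], \varphi\rangle$ by $L^2$-continuity of each summand in Definition~\ref{definition:dualityMixed} and the convergences \eqref{eq:approximationuep}; the first volume term is bounded by $\int_\Om \varphi H(p_k)\,dx$, whose $\limsup$ is controlled by $\int\varphi\,dH(p)$ via \eqref{eq:Res}.

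In the pure Neumann case ($\Gamma_D = \emptyset$, $w = 0$), the boundary condition $\sigma\nu = g \in L^\infty(\partial\Om;\R^n)$ turns the $H^{-\frac12}/H^{\frac12}$ pairing into $-\int_{\Gamma_N}g\cdot\varphi u_k\,d\Hs^{n-1}$, which cancels exactly with the last term of the identity. The smooth version reduces to $\langle[\sigma:p_k], \varphi\rangle = \int_\Om\varphi\sigma:p_k\,dx \le \int_\Om\varphi H(p_k)\,dx$, and passing to the limit produces $\langle[\sigma:p], \varphi\rangle \le \int_{\R^n}\varphi\,dH(p)$, which is the desired inequality since $p \in \mathcal M(\Om;\M^n_\sym)$ here.

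In the pure Dirichlet case ($\Gamma_N = \emptyset$, $g=0$), $p$ additionally carries the singular mass $(w-u)\odot\nu\,\Hs^{n-1}$ on $\partial\Om$ and the boundary pairing does not collapse. To dispose of it I embed $(u,e,p)$ into a larger domain: choose $\Om' \Supset \overline\Om$ bounded Lipschitz and an extension $\tilde w \in H^1(\R^n;\R^n)$ of $w$, and set $\tilde u := u\,\mathbf{1}_\Om + \tilde w\,\mathbf{1}_{\R^n\setminus\overline\Om}$, $\tilde e := e\,\mathbf{1}_\Om + E\tilde w\,\mathbf{1}_{\R^n\setminus\overline\Om}$, and $\tilde p := p$ regarded as a bounded Radon measure on $\Om'$ (supported in $\overline\Om$). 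The jump of $\tilde u$ across $\partial\Om$ is exactly $w-u$, which produces the singular part of $\tilde p$, so $E\tilde u = \tilde e + \tilde p$ in $\Om'$. Applying Lemma~\ref{lem:approximationBDkin} to $(\tilde u, \tilde e, \tilde p)$ on $\Om'$ yields smooth $(\tilde u_k, \tilde e_k, \tilde p_k)$ satisfying the convergences \eqref{eq:approximationuep} and \eqref{eq:Res} on $\Om'$. Using their restrictions to $\Om$ as smooth triples in the identity above, \eqref{eq:Res} on $\Om'$ then captures the boundary mass in the bound $\limsup_k\int_\Om\varphi H(\tilde p_k)\,dx \le \int_{\overline\Om}\varphi\,dH(p)$, so one concludes $\langle[\sigma:p], \varphi\rangle \le \int_{\R^n}\varphi\,dH(p)$ as soon as the boundary pairing $\langle\sigma\nu,\varphi(w-\tilde u_k)\rangle$ vanishes in the limit.

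The principal technical obstacle is precisely securing this convergence in the Dirichlet case: since $\sigma$ lies only in $L^2(\Om;\M^n_\sym)$ (its deviatoric part is not a priori bounded), the normal trace $\sigma\nu$ lives only in $H^{-\frac12}(\partial\Om;\R^n)$, so the native $L^1$-trace convergence from \eqref{eq:approximationuep} is insufficient to pair against it. The extension across $\partial\Om$ is what makes an upgrade possible: on $\Om'\setminus\overline\Om$ the extended function $\tilde u$ coincides with $\tilde w \in H^1$, so by refining the approximation outside $\Om$ (for instance by a plain mollification of $\tilde w$ in a tubular neighbourhood of $\partial\Om$ from outside) one can enforce the $H^{\frac12}$-trace convergence $\tilde u_k|_{\partial\Om} \to w|_{\partial\Om}$, which suffices to annihilate the boundary pairing against $\sigma\nu$ in the limit.
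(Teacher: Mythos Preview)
Your treatment of the pure Neumann case is correct and mirrors the paper's argument: approximate $(u,e,p)$ by Lemma~\ref{lem:approximationBDkin}, integrate by parts using \eqref{2911181910}, observe the boundary terms cancel exactly because $\sigma\nu=g$, apply the pointwise bound $\sigma:p_k\le H(p_k)$, and pass to the limit via \eqref{eq:approximationuep}--\eqref{eq:Res}.

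In the pure Dirichlet case your route diverges from the paper's, and the step you flag as the ``principal technical obstacle'' is a genuine gap. You need $\tilde u_k|_{\partial\Om}\to w$ in $H^{1/2}(\partial\Om;\R^n)$ to kill the pairing $\langle\sigma\nu,\varphi(w-\tilde u_k)\rangle_{H^{-1/2},H^{1/2}}$, but Lemma~\ref{lem:approximationBDkin} on $\Om'$ says nothing about traces on the \emph{interior} hypersurface $\partial\Om$; it only controls traces on $\partial\Om'$. Your proposed fix, ``refining the approximation outside $\Om$'', does not work as stated: the approximants $\tilde u_k$ must be smooth across $\partial\Om$, so their trace on $\partial\Om$ is the same from both sides, and standard mollification of a $BD$ function that jumps across $\partial\Om$ produces traces that converge (at best) to an average of $u|_{\partial\Om}$ and $w|_{\partial\Om}$, not to $w$. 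Forcing $\tilde u_k|_{\partial\Om}\to w$ in $H^{1/2}$ while simultaneously keeping $\tilde u_k\to u$ in $L^2(\Om)$, $\tilde e_k\to e$ in $L^2(\Om)$, and \eqref{eq:Res} on $\Om'$ would require a bespoke shift-and-mollify construction (push data across $\partial\Om$ from outside before convolving) together with a separate verification of each convergence---none of which you have supplied, and which goes well beyond invoking Lemma~\ref{lem:approximationBDkin} as a black box.

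The paper sidesteps this difficulty entirely by approximating the \emph{stress} rather than the kinematic triple: one takes smooth $\sigma_k\in C^\infty(\overline\Om;\M^n_\sym)$ with $\sigma_k(x)\in\mathbf K$ and $\sigma_k\to\sigma$ strongly in $H(\Div,\Om)$ (as in \cite[Lemma~2.3]{DMDSM}). With $\sigma_k$ smooth, the integration by parts formula in $BD(\Om)$ is available and yields directly $\langle[\sigma_k:p],\varphi\rangle=\int_{\Om\cup\Gamma_D}\varphi\,\sigma_k:dp\le\int_{\Om\cup\Gamma_D}\varphi\,dH(p)$, because $\sigma_k\nu$ is now continuous and pairs against the $L^1$ trace of $u$ (equivalently, against the boundary part $(w-u)\odot\nu$ of the measure $p$). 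Passing to the limit in $k$ is then routine since every term in Definition~\ref{definition:dualityMixed} is continuous under $H(\Div,\Om)$-convergence of $\sigma$. This is both simpler and avoids the $H^{1/2}$ versus $L^1$ trace mismatch altogether.
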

	
\begin{proof} In the case of pure Dirichlet boundary conditions, $\partial\Om=\Gamma_D$, we first note that $\mathcal S_g=H({\rm div},\Om)$. The duality pairing is then independent of $g$ and reduces to
$$\langle [\sigma \colon p], \varphi \rangle = \int_\Omega \varphi \sigma : (E w-e)\, dx+ \int_\Omega (w-u) \cdot {\rm div} \sigma \, \varphi \,dx + \int_\Omega \sigma \colon \big((w-u) \odot \nabla \varphi\big) \,dx$$
for all $\varphi \in C^\infty_c(\R^n)$. This case has already been addressed in \cite[Section 2]{BMo}. The result is a direct consequence an approximation result for $\sigma \in \mathcal K$ by smooth functions (see e.g. \cite[Lemma 2.3]{DMDSM}) as well as the integration by parts formula in $BD(\Omega)$ (see \cite[Theorem 3.2]{Bab}).
		
\medskip
		
In the case of pure Neumann boundary conditions, $\partial\Om=\Gamma_N$, using the integration by parts formula in $H^1(\Om;\R^n)$ for the function $w$, the duality pairing becomes independent of $w$ and reduces to
$$\langle [\sigma \colon p], \varphi \rangle := -\int_\Omega \varphi \sigma : e \,dx- \int_\Omega u \cdot {\rm div} \sigma \, \varphi \,dx - \int_\Omega \sigma \colon \big(u \odot \nabla \varphi\big)\, dx+\int_{\partial\Omega} { \varphi} g \cdot u \, d\mathcal H^{n-1}$$
for all $\varphi \in C^\infty_c(\R^n)$. According to {Lemma \ref{lem:approximationBDkin}}, there exists a sequence $\{(u_k,e_k, p_k)\}_{k \in \N}$ in $C^{\infty} (\overline{\Om}; \R^n \times \M^n_\sym \times \M^n_\sym) $ such that $E u_k = e_k + p_k$ in $\Om$ and \eqref{eq:approximationuep}--\eqref{eq:Res} hold. By definition of the duality pairing $\left[ \sigma: p_k  \right]$, for all $\varphi \in C^\infty_c (\R^n)$ we have
\begin{equation}
\left< \left[ \sigma: p_k  \right], \varphi \right> := - \into{\sigma : e_k \varphi \, dx}  -\into{\varphi u_k \cdot \Div \sigma \, dx} - \into{\sigma : (u_k \odot \nabla \varphi) \, dx} +{ \int_{\partial\Omega} { \varphi} g \cdot u_k \, d\mathcal H^{n-1} }, \label{eq:an4}
\end{equation}
and using the integration by parts formula \eqref{2911181910} for $\sigma \in H({\rm div},\Om)$, we get that 
\begin{equation}
\left< \left[ \sigma: p_k  \right], \varphi \right> := \int_\Om \sigma : p_k \varphi \, dx. \label{eq:an4bis}
\end{equation}
By definition of the support function $H$, we have that $H(p_k) \geq \sigma:p_k$ a.e. in $\Om$, hence if $\varphi \geq 0$, by \eqref{eq:an4}, it yields
\begin{eqnarray*}
\int_\Om H(p_k)\varphi\, dx & \geq & \int_\Om \sigma:p_k \varphi \,dx \\
& =&  - \into{\sigma : e_k \varphi \, dx}-\into{\varphi u_k \cdot \Div \sigma \, dx} - \into{\sigma : (u_k \odot \nabla\varphi) \, dx}   { + \int_{\partial \Om}{\varphi g \cdot u_k \, d\Hs^{n-1}}}.
\end{eqnarray*}
Hence, passing to the limit as $k \to \infty$ thanks to the convergences  \eqref{eq:approximationuep}--\eqref{eq:Res} yields
\begin{eqnarray*}
\int_\Om \varphi\, dH(p) & \ge & - \into{\sigma : e \varphi \, dx}  -\into{\varphi u \cdot {\rm div} \sigma \, dx} - \into{\sigma : (u \odot \nabla \varphi) \, dx} {+ \int_{\partial \Om}{\varphi g \cdot u \, d\Hs^{n-1}}} \\
& =: & \left<\left[ \sigma: p  \right], \varphi \right>,
\end{eqnarray*}
where we used once more the definition of duality $[\sigma:p]$. As a consequence, the distribution $H(p)-[\sigma:p]$ is nonnegative, hence it extends into a bounded Radon measure in $\R^n$. Thus, $[\sigma:p]$ extends as well into a bounded Radon measure in $\R^n$. Finally $[\sigma:p]$ is clearly supported in $\overline\Om$ from its very definition.
\end{proof}
	
\subsection{Mixed boundary conditions}
	
When $\Gamma_D \neq \emptyset$ and $\Gamma_N\neq \emptyset$, the situation is much more delicate as in \cite{FG}. We first prove the following general result giving the required convexity inequality but only outside $\Sigma$ (see \cite[Theorem 6.2]{FG}) which, unfortunately, will not be enough for our purpose. We will later do additional assumptions in dimensions $n=2$ and $3$ which will ensure the validity of the convexity inequality in the whole $\R^n$.
	
\begin{proposition}
\label{prop:convexinequalityMixed}
Let $\Om \subset \R^n$ be a bounded open set with Lipschitz boundary. For every $\sigma \in \mathcal S_g \cap \mathcal K$ and $(u,e,p) \in \mathcal A_w$ with $H(p) \in \mathcal M(\Om \cup \Gamma_D)$, the restriction of the distribution $[\sigma:p]$ to $\R^n \setminus \Sigma$ extends to a bounded Radon measure in $\R^n \setminus \Sigma$ and
\begin{equation}
\label{eq:ar1}
H(p) \ge [ \sigma : p ] \quad \text{ in } \mathcal M(\R^n \setminus \Sigma ).	
\end{equation}	
\end{proposition}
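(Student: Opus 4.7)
The plan is to localize $\varphi$ with a partition of unity adapted to the mixed structure, thereby reducing the statement to the two pure cases already treated in Proposition~\ref{prop:an1}. Since $\mathrm{supp}(\varphi)$ is compact and disjoint from $\Sigma = \overline{\Gamma_D}\cap\overline{\Gamma_N}$, the compact sets $K_D := \mathrm{supp}(\varphi)\cap\overline{\Gamma_D}$ and $K_N := \mathrm{supp}(\varphi)\cap\overline{\Gamma_N}$ are disjoint in $\R^n$. I would choose smooth cut-offs $\chi_D,\chi_N\in C^\infty(\R^n;[0,1])$ with $\chi_D + \chi_N = 1$ on $\mathrm{supp}(\varphi)$, $\chi_N\equiv 0$ near $K_D$, and $\chi_D\equiv 0$ near $K_N$, and split $\varphi = \varphi_D + \varphi_N$ with $\varphi_D := \varphi\chi_D$ and $\varphi_N := \varphi\chi_N$ in $C^\infty_c(\R^n;[0,\infty))$. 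By construction $\mathrm{supp}(\varphi_D)\cap\partial\Omega\subset\Gamma_D$ and $\mathrm{supp}(\varphi_N)\cap\partial\Omega\subset\Gamma_N$, so by linearity of $[\sigma:p]$ it is enough to prove the two inequalities $\int_{\overline\Omega}\varphi_D\,dH(p)\geq\langle[\sigma:p],\varphi_D\rangle$ and $\int_{\overline\Omega}\varphi_N\,dH(p)\geq\langle[\sigma:p],\varphi_N\rangle$ separately.

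For $\varphi_N$, I would reproduce the strain-side argument from the pure Neumann part of Proposition~\ref{prop:an1}: Lemma~\ref{lem:approximationBDkin} applied to $(u,e,p\res\Omega)$ provides smooth $(u_k,e_k,p_k)$ with $Eu_k = e_k + p_k$, and the pointwise inequality $H(p_k)\geq \sigma:p_k$ combined with \eqref{2911181910} applied to $\varphi_N u_k \in H^1(\Omega;\R^n)$ yields the required integral bound. The key observation is that the trace of $\varphi_N u_k$ is supported in $\Gamma_N$, so the duality pairing $\langle\sigma\nu,\varphi_N u_k\rangle$ collapses to $\int_{\Gamma_N}g\cdot\varphi_N u_k\,d\mathcal{H}^{n-1}$. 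Passing to the limit $k\to\infty$ via \eqref{eq:approximationuep}--\eqref{eq:Res}, and using that $\varphi_N\equiv 0$ on $\overline{\Gamma_D}$ (which removes the singular part of $H(p)$ carried by $\Gamma_D$ and, after integration by parts of the $w$-dependent terms in the definition of $[\sigma:p]$ applied to $\varphi_N w$ whose trace lies in $\Gamma_N$, produces the Neumann boundary integral), identifies the limit with $\langle[\sigma:p],\varphi_N\rangle$. For $\varphi_D$, I would instead approximate on the stress side as in the pure Dirichlet part of Proposition~\ref{prop:an1}: smooth fields $\sigma^j\in C^\infty(\overline\Omega;\M^n_{\sym})$ with $\sigma^j(x)\in\mathbf K$ and $\sigma^j\to\sigma$ in $H(\Div,\Omega)$ (as supplied by \cite[Lemma 2.3]{DMDSM}) satisfy $H(p)\geq\sigma^j:p$ as measures on $\Omega\cup\Gamma_D$; the $BD$ integration by parts formula \cite[Theorem 3.2]{Bab} applied to the smooth matrix field $\varphi_D\sigma^j$, combined with $\varphi_D\equiv 0$ on $\overline{\Gamma_N}$, then yields $\int_{\overline\Omega}\varphi_D\,dH(p)\geq\langle[\sigma:p],\varphi_D\rangle$ in the limit $j\to\infty$.

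Adding the two partial inequalities gives $H(p)\geq[\sigma:p]$ as distributions on $\R^n\setminus\Sigma$; the positivity of $H(p)-[\sigma:p]$, combined with the finiteness of $H(p)\in\mathcal{M}(\Omega\cup\Gamma_D)$, then implies via the Riesz representation theorem that the restriction of $[\sigma:p]$ to $\R^n\setminus\Sigma$ extends to a bounded signed Radon measure. The main obstacle, and the very reason the argument is confined to $\R^n\setminus\Sigma$, is precisely the splitting of the boundary duality $\langle\sigma\nu,\cdot\rangle_{\partial\Omega}$: because $\sigma\nu$ is only an element of $H^{-1/2}(\partial\Omega;\R^n)$, it cannot be restricted separately to $\Gamma_D$ and $\Gamma_N$ unless the test function is supported away from their common boundary $\Sigma$. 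Avoiding $\Sigma$ is exactly what allows the cut-offs $\chi_D,\chi_N$ to be built so that $\varphi_D u_k$ and $\varphi_N w$ have traces supported in the correct portion of $\partial\Omega$; without it, concentration of $H(p)-[\sigma:p]$ on $\Sigma$ cannot be ruled out by these functional-analytic means, which is why the additional dimensional and geometric hypotheses of Propositions~\ref{prop:n=2} and~\ref{prop:n=3} are needed later.
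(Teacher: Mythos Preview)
Your proof is correct and proceeds along a route that is close in spirit to the paper's but organized differently. The paper localizes by splitting the \emph{stress}: it writes $\sigma=\psi\sigma+(1-\psi)\sigma=:\sigma_1+\sigma_2$ with $\psi$ a cut-off supported in a neighborhood $W$ of $\Gamma_N\setminus U$, applies the strain-side approximation (Lemma~\ref{lem:approximationBDkin}) to the $\sigma_1$ term and the stress-side approximation (\cite[Lemma~2.3]{DMDSM}) to the $\sigma_2$ term, and then has to remove an overcounting $\int_{W\cap\Omega}\varphi\,dH(p)$ by letting $W$ shrink to $\overline{\Gamma_N\setminus U}$ at the end. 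You instead split the \emph{test function} $\varphi=\varphi_D+\varphi_N$, which delivers the sharp inequality directly with no shrinking step, since $\int\varphi_D\,dH(p)+\int\varphi_N\,dH(p)=\int\varphi\,dH(p)$.

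The price you pay for this cleanliness is the step where you assert $\langle\sigma\nu,\varphi_N u_k\rangle_{H^{-1/2},H^{1/2}}=\int_{\Gamma_N}g\cdot\varphi_N u_k\,d\mathcal H^{n-1}$: this uses that the $H^{-1/2}$ normal trace can be restricted to the open portion $\Gamma_N$ and identified with $g$ when paired against $H^{1/2}$ functions compactly supported there. That is exactly the intended meaning of $\sigma\in\mathcal S_g$, so the step is legitimate, but it is worth stating explicitly. The paper's splitting of $\sigma$ sidesteps this issue altogether, since $\sigma_1\nu=\psi g$ holds as an element of $L^\infty(\partial\Omega;\R^n)$ on the \emph{whole} boundary, so the duality becomes an honest integral with no localization needed. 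Either way, both arguments rest on the same two approximation tools and break down at $\Sigma$ for the same reason you identify: neither cut-off construction can separate $\Gamma_D$ from $\Gamma_N$ once the support of $\varphi$ meets their common boundary. As a final remark, the paper reduces to $w=0$ at the outset (via $(u,e,p)\mapsto(u-w,e-Ew,p)$), which would also streamline your handling of the $w$-dependent terms.
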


\begin{proof}
Without loss of generality, we can assume $w=0$ in Definition \ref{definition:dualityMixed}. Let us fix a test function $\varphi \in C^\infty_c (\R^n \setminus \Sigma)$, and let $U \subset \R^n$ be an open set such that $\Sigma \subset U$ and $U \cap {\rm supp} (\varphi) = \emptyset$. Let us consider another open set $W \subset \R^n$ such that $\Gamma_N \setminus U \subset W$ and $\overline W \cap \partial \Om \subset \Gamma_N$. Finally, let $W'\subset \R^n$ be a further open set such that $W'\subset\subset W$, $\Gamma_N \setminus U \subset W'$ and ${\rm supp} (\varphi ) \cap \Gamma_N \subset W'$. Let $\psi \in C^\infty_c(\R^n)$ be a cut-off function such that $0 \le \psi \le 1$, ${\rm Supp}(\psi) \subset W$ and $\psi=1$ on $W'$. We decompose $\sigma$ 
as follows,
$$\sigma = \psi \sigma + (1 - \psi) \sigma =: \sigma_1 + \sigma_2.$$
Note that, for $i =1,2$, we have that $ \sigma_i \in H({\rm div}, \Om)$. Moreover,
\begin{equation}
\label{eq:extensionsigma1}
\sigma_1 \nu := \psi (\sigma  \nu) = \psi g \quad \textrm{on { $\partial\Om$} \quad and \quad  } {\sigma_2 = 0 \quad \text{on } W'. } 
\end{equation}
Substituting $\sigma$ with this decomposition in Definition \ref{definition:dualityMixed} we get that
\begin{align}\label{eq:sigma12} 
\langle [\sigma \colon p], \varphi \rangle &:= -\int_\Omega \varphi \sigma : e \,dx- \int_\Omega u \cdot {\rm div} \sigma \, \varphi \, dx - \int_\Omega \sigma \colon \big(u\odot \nabla \varphi\big) \, dx + \int_{ \Gamma_N}\varphi g \cdot u \, d \Hs^{n-1} \nonumber \\
& =  -\int_\Omega \varphi \sigma_1 : e \, dx- \int_\Omega u \cdot {\rm div} \sigma_1 \, \varphi \, dx - \int_\Omega \sigma_1 \colon \big(u\odot \nabla \varphi\big) \, dx +  \int_{ \Gamma_N}\varphi g \cdot u \, d \Hs^{n-1}\nonumber \\
& \quad  -\int_\Omega \varphi \sigma_2 : e \, dx- \int_\Omega u \cdot {\rm div} \sigma_2 \, \varphi \, dx - \int_\Omega \sigma_2 \colon \big(u\odot \nabla \varphi\big) \, dx.
\end{align}
		
We first approximate $(u,e,p)$ in the expression \eqref{eq:sigma12} involving $\sigma_1$. Indeed, thanks to  Lemma \ref{lem:approximationBDkin}, there exists a sequence $\{(u_k,e_k, p_k)\}_{k \in \N}$ in $C^{\infty} (\overline{\Om}; \R^n \times \M^n_\sym \times \M^n_\sym) $ such that $E u_k = e_k + p_k$ in $\Om$ and \eqref{eq:approximationuep}--\eqref{eq:Res} hold. On the one hand, we have
\begin{multline}\label{eq:divers-conv}
-\int_\Omega \varphi \sigma_1 : e_k \, dx- \int_\Omega u_k \cdot {\rm div} \sigma_1 \, \varphi \, dx - \int_\Omega \sigma_1 \colon \big(u_k\odot \nabla \varphi\big) \, dx + \int_{ \Gamma_N}\varphi g \cdot u_k \, d \Hs^{n-1} \\
\to -\int_\Omega \varphi \sigma_1 : e \, dx- \int_\Omega u \cdot {\rm div} \sigma_1 \, \varphi \, dx - \int_\Omega \sigma_1 \colon \big(u\odot \nabla \varphi\big) \, dx + \int_{ \Gamma_N}\varphi g \cdot u \, d \Hs^{n-1}.
\end{multline}		
On the other hand, for any $k \in \N$, thanks to the integration by parts formula for $\sigma_1 \in H(\Div,\Om)$ together with  \eqref{eq:extensionsigma1}, we can observe that
\begin{align}
& -\int_\Omega \varphi \sigma_1 : e_k \, dx- \int_\Omega u_k \cdot {\rm div} \sigma_1 \, \varphi \, dx - \int_\Omega \sigma_1 \colon \big(u_k\odot \nabla \varphi\big) \, dx +  \int_{ \Gamma_N}\varphi g \cdot u_k \, d \Hs^{n-1}\nonumber \\
&\qquad = \into{\varphi \sigma_1 : p_k} \, dx - \langle \sigma_1 \nu, \varphi u_k \rangle_{H^{-\frac{1}{2}} (\partial \Om;\R^n), H^{\frac{1}{2}} (\partial \Om;\R^n) }  + \int_{\Gamma_N}{\varphi g \cdot u_k \, d \Hs^{n-1} } \nonumber \\ 
& \qquad =  \into{\varphi \sigma_1 : p_k} \, dx -  \int_{\partial \Om}{\varphi { \psi g} \cdot u_k \, d \Hs^{n-1} }  + \int_{\Gamma_N}{\varphi g \cdot u_k \,d \Hs^{n-1} }\nonumber\\
& \qquad =  \into{\varphi \sigma_1 : p_k} \, dx,\label{eq:integrationbypartsrobin1}
\end{align}
where we used that $\psi=1$ on ${\rm Supp}(\varphi) \cap \Gamma_N$ and $\psi=0$ in $\partial\Om \setminus \Gamma_N$. Hence, by definition of the support function $H$, we have that $H(p_k) \geq \sigma:p_k$ a.e. in $\Om$. As a consequence,  if $\varphi \geq 0$,
\begin{eqnarray*}
\int_{\Om } H(p_k) \psi \varphi \, dx  &\geq  & \int_\Om \sigma_1:p_k \varphi \, dx\\
& =& -\into{\varphi u_k \cdot \Div \sigma_1 \, dx} - \into{\sigma_1 : (u_k \odot \nabla \varphi) \, dx} \\
&&\qquad- \into{\sigma_1 : e_k \varphi \,dx}   { + \int_{\Gamma_N}{\varphi g \cdot u_k \,d\Hs^{n-1}}}.
\end{eqnarray*}
We can  pass to the limit as $k \to \infty$ owing to \eqref{eq:Res} and \eqref{eq:divers-conv}. We deduce that
\begin{eqnarray}\label{eq:rob2}
\int_{W\cap \Om }  \varphi \, dH(p)  & \ge & \int_\Om \varphi\psi\, dH(p)\nonumber\\
& \geq & -\into{\varphi u \cdot {\rm div} \sigma_1 \, dx} - \into{\sigma_1 : (u \odot \nabla \varphi) \, dx}\nonumber\\
&&\qquad - \into{\sigma_1 : e \varphi \, dx} { + \int_{\Gamma_N}{\varphi g \cdot u \, d\Hs^{n-1}}}, 
\end{eqnarray}
where we have used the fact that $p$, hence $H(p)$, does not charge $\Gamma_N$.
		
\medskip
		
Coming back to \eqref{eq:sigma12}, we now approximate the last term in the right-hand side by approximating $\sigma_2$. Arguing as in \cite[Lemma 2.3]{DMDSM} or Step 2 in \cite[Theorem 6.2]{FG} and using \eqref{eq:extensionsigma1}, there exists a sequence $\lk \sigma^{k}_2 \rk_{k \in \N} \subset C^\infty(\overline \Om; \mathbb{M}^n_\sym)$ such that $\sigma_2^k (x) \in \mathbf K$ for all $x \in \overline\Om$ and
\begin{equation}
\label{eq:approximatiosigma2}
\begin{cases}
\sigma_2^k \to \sigma_2 & \textrm{strongly in }H({\rm div},\Om),\\
\sigma_2^k \nu = 0 & \textrm{on $W' \cap \Gamma_N$}.
\end{cases}
\end{equation}
Therefore, using the integration by parts formula in $BD(\Om)$, we infer that
\begin{equation}
\label{eq:approximationitgdirichlet}
\begin{split}
& -\int_\Omega \varphi \sigma_2^k : e \, dx- \int_\Omega u \cdot {\rm div} \sigma_2^k \, \varphi \, dx - \int_\Omega \sigma_2^k \colon \big(u\odot \nabla \varphi\big) \, dx \\
&\qquad = \int_{\Om}{\varphi \sigma_2^k :\, dp} - \int_{\partial \Om}{\varphi (\sigma_2^k \nu)\cdot u \,d\Hs^{n-1}} \\
&\qquad = \int_{\Om}{\varphi \sigma_2^k :\, dp} - \int_{ \Gamma_D}{\varphi (\sigma_2^k \nu)\cdot u \, d\Hs^{n-1}} \\
&\qquad=: \int_{\Om \cup \Gamma_D}{\varphi \sigma_2^k : \, dp}
\end{split}
\end{equation}
where in the second equality, we have used the fact that ${\rm supp} (\varphi) \cap \partial\Om \subset \Gamma_D \cup (\Gamma_N \cap W')$ and the last condition of \eqref{eq:approximatiosigma2}, while in the third equality we used that $p\mres \Gamma_D= - u \odot \nu \Hs^{n-1}\mres \Gamma_D$. Using that  $\sigma_2^k (x) \in \mathbf K$ for all $x \in \overline\Om$, we get that
$$\int_{\Om \cup \Gamma_D} \varphi \,dH(p) \geq \int_{\Om \cup \Gamma_D}{\varphi \sigma_2^k : \, dp},$$
hence passing to the limit as $k \to \infty$ using \eqref{eq:approximatiosigma2} and \eqref{eq:approximationitgdirichlet} leads to
\begin{equation}
\label{eq:rob1}
\int_{\Om \cup \Gamma_D} \varphi \,dH(p) \geq -\int_\Omega \varphi \sigma_2 : e \, dx- \int_\Omega u \cdot {\rm div} \sigma_2 \, \varphi \, dx - \int_\Omega \sigma_2 \colon \big(u\odot \nabla \varphi\big) \, dx.
\end{equation}
		
Combining \eqref{eq:sigma12}, \eqref{eq:rob2} and \eqref{eq:rob1}, we conclude that
$$\langle [\sigma:p],\varphi\rangle \leq \int_{\Om \cup \Gamma_D} \varphi\, dH(p)+\int_{W\cap \Om }  \varphi \, dH(p) .$$
Let us finally consider a decreasing sequence of open sets $\{W_j\}_{j \in \N}$ such that $\Gamma_N \setminus U \subset W_j$ and $W_j \cap \partial \Om \subset \Gamma_N$ for all $j \in \N$, and $\bigcap_j W_j=\overline{\Gamma_N \setminus U}$. Passing to the limit in the previous expression as $j \to \infty$ owing to the monotone convergence theorem yields
$$\langle [\sigma:p],\varphi\rangle \leq \int_{\Om \cup \Gamma_D} \varphi \,dH(p)+\int_{\overline{\Gamma_N \setminus U}}  \varphi \, dH(p) .$$
As $\overline{\Gamma_N \setminus U} \subset \Gamma_N \cup \Sigma$ and $p$ is concentrated on $\Om \cup \Gamma_D$, we deduce that 
$$\langle [\sigma:p],\varphi\rangle \leq \int_{\Om \cup \Gamma_D} \varphi \,dH(p)$$
which completes the proof of the proposition.
\end{proof}
		
In the remaining part of this section, we exhibit some particular cases where we can extend inequality  \eqref{eq:ar1} above into one in $\mathcal{M} (\R^n)$. The following result deals with the two-dimensional case where the convexity inequality holds provided $\Sigma$ is a finite set.
	
\begin{proposition}\label{prop:n=2}
Under the same assumptions as in Proposition \ref{prop:convexinequalityMixed}, assume further that $n=2$ and that $\Sigma$ is a finite set. Then, for all $\sigma \in \mathcal S_g \cap \mathcal K$ and all $(u,e,p) \in \mathcal A_w$,
$$H(p) \geq [\sigma:p] \quad \text{ in } \mathcal M(\R^2).$$
\end{proposition}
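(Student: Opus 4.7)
The plan is to extend Proposition~\ref{prop:convexinequalityMixed} from $\mathcal{M}(\R^2\setminus\Sigma)$ to $\mathcal{M}(\R^2)$ by exploiting the dimension-two fact that an isolated point has zero $W^{1,2}$-capacity: the constant function $1$ can be approximated by smooth functions vanishing near each $x_i\in\Sigma$ with arbitrarily small $L^2$-norm of the gradient. After reducing to $w=0$ via the integration by parts formula in $H^1(\Omega;\R^n)$ (as in the proofs of Propositions~\ref{prop:an1} and~\ref{prop:convexinequalityMixed}), fix a nonnegative $\varphi\in C^\infty_c(\R^2)$.

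For $\delta\in(0,1)$, I would introduce the logarithmic cut-off $\chi_\delta\in C^\infty(\R^2;[0,1])$ with $\chi_\delta=0$ on $\bigcup_i B(x_i,\delta^2)$, $\chi_\delta=1$ outside $\bigcup_i B(x_i,\delta)$, obtained by a smooth radial log interpolation in each annulus $B(x_i,\delta)\setminus B(x_i,\delta^2)$. A standard computation gives $\|\nabla\chi_\delta\|_{L^2(\R^2)}^2\leq CN/|\log\delta|\to 0$. Since $\varphi\chi_\delta\in C^\infty_c(\R^2\setminus\Sigma)$, Proposition~\ref{prop:convexinequalityMixed} yields
\begin{equation*}
\langle[\sigma:p],\varphi\chi_\delta\rangle \;\leq\; \int_{\Omega\cup\Gamma_D}\varphi\chi_\delta\,dH(p).
\end{equation*}
As $\delta\to 0^+$, the right-hand side tends to $\int\varphi\,dH(p)$ by dominated convergence, since $H(p)$ is concentrated on $\Omega\cup\Gamma_D$ and therefore does not charge the finite set $\Sigma\subset\partial\Omega$.

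On the left-hand side, expanding $[\sigma:p]$ via Definition~\ref{definition:dualityMixed} (with $w=0$), each of the four terms has an integrand in $L^1(\Omega)$ or $L^1(\Gamma_N)$, so it converges by dominated convergence to its $\chi_\delta=1$ counterpart except for the contribution coming from $\varphi\nabla\chi_\delta$ in $\nabla(\varphi\chi_\delta)=\chi_\delta\nabla\varphi+\varphi\nabla\chi_\delta$. Thus the argument reduces to proving that the error
\begin{equation*}
R_\delta:=\int_\Omega \sigma:\bigl(u\odot\varphi\nabla\chi_\delta\bigr)\,dx
\end{equation*}
tends to zero. Cauchy-Schwarz gives $|R_\delta|\leq\|\varphi\|_\infty\|\sigma\|_{L^2(A_\delta)}\|u\nabla\chi_\delta\|_{L^2(\Omega)}$, where $A_\delta=\bigcup_i \bigl(B(x_i,\delta)\setminus B(x_i,\delta^2)\bigr)$, and the first factor tends to $0$ because $\sigma\in L^2$ and $|A_\delta|\to 0$.

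The main obstacle is to control $\|u\nabla\chi_\delta\|_{L^2}$ as $\delta\to 0$: in dimension $2$ the embedding $BD\hookrightarrow L^2$ is sharp, so no higher integrability of $u$ is freely available. I plan to circumvent this by approximating $(u,e,p)$ by smooth $(u_k,e_k,p_k)$ via Lemma~\ref{lem:approximationBDkin} and, for each fixed $u_k$, integrating by parts against the log cut-off by means of~\eqref{2911181910}. The resulting expression for $R_\delta^k:=\int(\sigma u_k\varphi)\cdot\nabla\chi_\delta\,dx$ is a combination of bulk integrals against $\chi_\delta$ and a boundary duality $\langle\sigma\nu,\chi_\delta u_k\varphi\rangle_{H^{-1/2},H^{1/2}}$; as $\delta\to 0$, the bulk contributions collapse by dominated convergence while the boundary term matches the reverse IBP identity (which is zero), yielding $R_\delta^k\to 0$ for every fixed $k$. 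A diagonal argument $k=k(\delta)\to\infty$, chosen so that $\|u-u_{k(\delta)}\|_{L^2}\|\nabla\chi_\delta\|_{L^\infty}\to 0$ (possible because $\|\nabla\chi_\delta\|_{L^\infty}$ grows only polynomially in $1/\delta$), then transfers this convergence to $R_\delta$. The finiteness of $\Sigma$ is essential throughout, since it ensures that only finitely many log cut-offs are superposed, so the local error terms around each $x_i$ can be summed.
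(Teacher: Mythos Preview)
Your capacitary-cutoff strategy is natural (and close in spirit to the paper's three-dimensional argument in Proposition~\ref{prop:n=3}), but the diagonal step contains a genuine gap. You need $k(\delta)\to\infty$ satisfying two conditions at once: (A) $\|u-u_{k(\delta)}\|_{L^2}\,\|\nabla\chi_\delta\|_{L^\infty}\to 0$, so that $|R_\delta-R_\delta^{k(\delta)}|\to 0$; and (B) $R_\delta^{k(\delta)}\to 0$. Condition (A) forces $k(\delta)$ to grow fast enough to beat $\|\nabla\chi_\delta\|_{L^\infty}\sim(\delta^2|\log\delta|)^{-1}$. For (B), your integration-by-parts rewriting gives a bound of the form $|R_\delta^k|\le C_k\,\|\chi_\delta-1\|_{H^1(\R^2)}$, where $C_k$ involves $\|\mathrm{div}(\varphi\sigma u_k)\|_{L^2}$ and the $H^{-1/2}$ normal-trace norm of $\varphi\sigma u_k$, hence quantities of order $\|u_k\|_{C^1}$. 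Lemma~\ref{lem:approximationBDkin} gives no rate relating $\|u_k\|_{C^1}$ to $\|u-u_k\|_{L^2}$, so (A) and (B) pull in opposite directions and cannot be reconciled in general. Equivalently: for fixed $k$, $\limsup_\delta|R_\delta-R_\delta^k|$ is essentially $\limsup_\delta|R_\delta|$ with $u$ replaced by $u-u_k\in L^2$, and the same $L^2$-only obstruction persists.

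The paper sidesteps this by a different mechanism: rather than cutting off the test function, it approximates the \emph{kinematic triple} $(u,e,p)\in\mathcal A_0$ by triples $(u_k,e_k,p_k)\in\mathcal A_0$ that \emph{vanish in a neighbourhood $U_k$ of $\Sigma$}, via the two-dimensional construction of \cite[Example~2]{FG} (which uses precisely the finiteness of $\Sigma$). For such triples $[\sigma:p_k]$ is supported in $\R^2\setminus U_k$, so Proposition~\ref{prop:convexinequalityMixed} already yields $\langle[\sigma:p_k],\varphi\rangle\le\int_{\Om\cup\Gamma_D}\varphi\,dH(p_k)$ for \emph{every} nonnegative $\varphi\in C^\infty_c(\R^2)$, with no $\nabla\chi_\delta$ term appearing. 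One then passes to the limit $k\to\infty$ using $e_k\to e$ and $u_k\to u$ in $L^2$, $u_k\to u$ in $L^1(\partial\Omega)$, and $\limsup_k\int\varphi\,dH(p_k)\le\int\varphi\,dH(p)$, all of which are part of the approximation.
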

	
\begin{proof}
We again reduce to the case $w=0$. Arguing as in \cite[Example 2]{FG}, for all  $(u,e,p) \in \mathcal A_0$, there exists a sequence $\lk (u_k, e_k, p_k) \rk_{k \in \N}$ in $\mathcal A_0$ such that, for each $k \in \N$, $(u_k, e_k, p_k) =0 $  in an open neighborhood $U_k$ of $\Sigma$ and
\begin{equation}
\label{eq:weakapproximation}
\begin{cases}
u_k \to u \quad \textrm{strongly in $L^2(\Om; \R^2)$}, \\
e_k \to e \quad \textrm{strongly in $L^2(\Om; \mathbb{M}^2_\sym)$}, \\
p_k \rightharpoonup p \quad \textrm{weakly* in $\mathcal{M}(\Om\cup \Gamma_D; \mathbb{M}^2_\sym)$},\\
 |p_k|(\Om \cup \Gamma_D) \to |p|(\Om\cup \Gamma_D).
\end{cases}
\end{equation}
A careful inspection of the argument used in \cite[Example 2]{FG} shows that $|Eu_k|(\Omega) \to |Eu|(\Omega)$. Thus, applying \cite[Proposition 3.4]{Bab}, we deduce the convergence of the trace
\begin{equation}\label{eq:convtrace1}
u_k \to u\quad \text{ strongly in }L^1(\partial\Omega; \R^n).
\end{equation}
Moreover, for all $\varphi \in C^\infty_c(\R^2)$ with $\varphi \geq 0$,
\begin{equation}\label{eq:Res2}
\limsup_{k \to \infty} \int_{\Om \cup \Gamma_D} \varphi \, dH(p_k) \leq \int_{\Om \cup \Gamma_D} \varphi \, dH(p).
\end{equation}
Once more, \eqref{eq:Res2} does not follow from the Reshetnyak continuity Theorem because our $H$ does not fulfill the assumptions of that result.

Let $V_k$ be an open set satisfying $\Sigma \subset V_k \subset\subset U_k$, and let $\psi_k \in C^\infty_c(\R^2;[0,1])$ be a cut-off function such that $\psi_k=1$ in $V_k$ and ${\rm Supp}(\psi_k) \subset U_k$. For every $\varphi \in C^\infty_c(\R^2)$ with $\varphi\geq 0$, then $(1-\psi_k)\varphi \in C^\infty_c(\R^2 \setminus \Sigma)$ so that by Proposition \ref{prop:convexinequalityMixed},
$$\int_{\Om \cup \Gamma_D} \varphi \, dH(p_k) \geq \int_{\Om \cup \Gamma_D} \varphi(1-\psi_k) \, dH(p_k) \geq \langle [\sigma:p_k],\varphi(1-\psi_k)\rangle.$$
Since by construction ${\rm Supp}(u_k,e_k,p_k) \subset \R^2 \setminus U_k$, it is easily seen that ${\rm Supp}([\sigma:p_k]) \subset \R^2 \setminus U_k$ hence $ \langle [\sigma:p_k],\varphi\psi_k\rangle=0$. As a consequence
$$\int_{\Om \cup \Gamma_D} \varphi \, dH(p_k) \geq  \langle [\sigma:p_k],\varphi\rangle,$$
and the conclusion follows passing to the limit as $k \to \infty$ owing to the convergences \eqref{eq:weakapproximation}--\eqref{eq:Res2}.
\end{proof}
	
The three-dimensional case requires additional regularity assumptions for the domain $\Om$, and a particular geometric structure for the elasticity set $\mathbf K$ which has to be a cylinder whose axis is given by the set of spherical matrices. Note that these assumptions cover the physical cases of Von Mises and Tresca models.
	
\begin{proposition}\label{prop:n=3}
Under the same assumptions as in Proposition \ref{prop:convexinequalityMixed}, assume further that $n=3$ and that:
\begin{itemize}
\item[(i)] $\Om \subset \R^3$ is a bounded open set of class $C^2$ and $\Sigma$ is $1$-dimensional submanifold of class $C^2$;
\item[(ii)] $\mathbf K=K_D \oplus (\R\, {\rm Id})=\{\sigma  \in \mathbb M^3_{\rm sym} : \, \sigma_D\in K_D \}$ where $K_D \subset \mathbb M^3_D$ is a compact and convex set containing $0$ in its interior.
\end{itemize}
Then, for all $\sigma \in \mathcal S_g \cap \mathcal K$ and all $(u,e,p) \in \mathcal A_w$,
$$H(p) \geq [\sigma:p] \quad \text{ in } \mathcal M(\R^3).$$
\end{proposition}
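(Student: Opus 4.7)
The strategy is to approximate the stress $\sigma$ by smooth admissible fields $\sigma_k$, thereby reducing the generalized convexity inequality to a pointwise one. Hypothesis $(i)$ supplies the regularity needed to build such an approximation near the interface $\Sigma$ via local flattening, while hypothesis $(ii)$, the cylindrical structure of $\mathbf K$, is what allows us to restore the pointwise constraint $\sigma_k\in\mathbf K$ after smoothing while retaining the exact Neumann trace on $\Gamma_N$.

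\emph{Step 1: Smooth admissible approximation of $\sigma$.} The core ingredient is to construct a sequence $\{\sigma_k\}_{k\in\N}\subset C^\infty(\overline\Om;\M^3_\sym)$ with $\sigma_k(x)\in\mathbf K$ for every $x\in\overline\Om$, $\sigma_k\nu = g$ on $\Gamma_N$, and $\sigma_k\to\sigma$ strongly in $H(\mathrm{div},\Om)$. Via a partition of unity, on patches away from $\Sigma$ one applies the inward-translation and convolution procedure of \cite[Lemma 2.3]{DMDSM}. On patches meeting $\Sigma$, the $C^2$ regularity of $\partial\Om$ and of $\Sigma$ permits a local flattening sending $\partial\Om$ to a hyperplane and $\Sigma$ to a codimension-one affine subspace therein, after which a reflection-based convolution in the spirit of \cite{KT} and \cite[Theorem 6.2, Step 2]{FG} yields the approximation. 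Admissibility $\sigma_k\in\mathbf K$ is preserved using the cylindrical structure: any small deviation of $(\sigma_k)_D$ out of $K_D$ is projected back in, while the unconstrained hydrostatic part is simultaneously chosen so as not to disturb the Neumann trace.

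\emph{Step 2: Pointwise inequality, integration by parts and passage $m\to\infty$.} After reducing to $w=0$ as in Proposition \ref{prop:convexinequalityMixed}, one notes that $H(q)=+\infty$ unless $\mathrm{tr}\,q=0$, so the assumption $H(p)\in\mathcal M(\Om\cup\Gamma_D)$ forces $p$ to be deviatoric. Approximate $(u,e,p)$ by smooth $(u_m,e_m,p_m)$ as in Lemma \ref{lem:approximationBDkin}. The pointwise bound $H(p_m)\ge \sigma_k:p_m$ in $\Om$ (consequence of $\sigma_k\in\mathbf K$), tested against $\varphi\in C^\infty_c(\R^3)$ with $\varphi\ge 0$ and combined with the classical integration by parts for the smooth $\sigma_k$ (splitting $\partial\Om$ into $\Gamma_D$ and $\Gamma_N$, and using $\sigma_k\nu=g$ on $\Gamma_N$), yields after letting $m\to\infty$ via \eqref{eq:approximationuep}--\eqref{eq:Res} and the $L^1(\partial\Om)$-convergence of the traces:
$$\int_\Om \varphi\,dH(p)\ \ge\ \langle[\sigma_k:p],\varphi\rangle + \int_{\Gamma_D}\varphi\,(\sigma_k\nu)\cdot u\,d\Hs^{n-1}.$$
Since $\sigma_k\in\mathbf K$ and $p\res\Gamma_D = -u\odot\nu\,\Hs^{n-1}$, the pointwise inequality $-(\sigma_k\nu)\cdot u = \sigma_k:(-u\odot\nu)\le H(-u\odot\nu)$ gives $\int_{\Gamma_D}\varphi(\sigma_k\nu)\cdot u\,d\Hs^{n-1}\ge -\int_{\Gamma_D}\varphi\,dH(p)$, whence
$$\int_{\Om\cup\Gamma_D}\varphi\,dH(p)\ \ge\ \langle[\sigma_k:p],\varphi\rangle.$$
Finally, the strong convergence $\sigma_k\to\sigma$ in $H(\mathrm{div},\Om)$ combined with the fact that each term in Definition \ref{definition:dualityMixed} depends linearly and continuously on $\sigma$ in that space (using $e,u\in L^2$ and $u|_{\Gamma_N}\in L^1$) gives $\langle[\sigma_k:p],\varphi\rangle\to\langle[\sigma:p],\varphi\rangle$ as $k\to\infty$. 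Thus $H(p) - [\sigma:p]\ge 0$ as a distribution in $\R^3$, so $[\sigma:p]$ extends to a bounded Radon measure supported in $\overline\Om$ and $H(p)\ge [\sigma:p]$ in $\mathcal M(\R^3)$.

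\emph{Main obstacle.} The whole argument hinges on the smooth admissible approximation of Step 1, and this is where all three assumptions of the proposition are essential. A direct adaptation of the two-dimensional argument of Proposition \ref{prop:n=2} is doomed, because naive cutoffs killing $u$ in a shrinking neighborhood of $\Sigma$ do not have gradients with the integrability required to control $u\odot\nabla\eta$: a smooth one-dimensional curve in $\R^3$ has positive $H^1$-capacity. The only way around is to modify $\sigma$ instead, which is feasible precisely because $\mathbf K$ is a cylinder over a compact $K_D$ (so the hydrostatic component is free to absorb the perturbation) and because $\partial\Om$ and $\Sigma$ are $C^2$ (so the local flattening and reflection-convolution scheme applies).
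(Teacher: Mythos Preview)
Your Step 1 is the gap, and it is precisely the obstruction the proposition is designed to overcome. You claim to construct $\sigma_k\in C^\infty(\overline\Om;\M^3_\sym)$ with $\sigma_k(x)\in\mathbf K$ everywhere, the \emph{exact} Neumann trace $\sigma_k\nu=g$ on all of $\Gamma_N$ (including arbitrarily close to $\Sigma$), and $\sigma_k\to\sigma$ in $H(\Div,\Om)$. The references you invoke do not provide this: \cite[Lemma~2.3]{DMDSM} has no Neumann constraint, and the approximation in \cite[Theorem~6.2, Step~2]{FG} (as used in the present paper's Proposition~\ref{prop:convexinequalityMixed}) only secures $\sigma_2^k\nu=0$ on a compact subset $W'\cap\Gamma_N$ bounded away from $\Sigma$. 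More fundamentally, your proposed repair mechanism fails: adding a hydrostatic correction $h_k\,{\rm Id}$ modifies the traction by $h_k\nu$, which is normal to $\partial\Om$; it can therefore adjust only the normal component of $\sigma_k\nu$ and cannot restore the tangential components perturbed by convolution near $\Sigma$. Since the constraint $\sigma_k\in\mathbf K$ forces any admissible correction to be hydrostatic (the deviatoric part is already saturated in the compact $K_D$), there is no room to fix the full trace while staying in $\mathbf K$. Without Step~1 your Step~2, which is otherwise fine, has nothing to act on.

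The paper takes a completely different route that avoids approximating $\sigma$ near $\Sigma$ altogether. The cylindrical structure gives $\sigma_D\in L^\infty$, whence $\nabla(\tr\sigma)=3(\Div\sigma-\Div\sigma_D)\in L^2+W^{-1,\infty}\subset W^{-1,6}$; Ne\v cas' Lemma then yields $\tr\sigma\in L^6$, so $\sigma\in L^6(\Om;\M^3_\sym)$. This higher integrability, combined with the $C^2$ regularity of $\Om$ and $\Sigma$, feeds into \cite[Proposition~2.7]{KT} (with $n=3$, $p=3/2$) to give the Kohn--Temam condition $\frac{1}{\delta}\int_{\Sigma_\delta}|\sigma||u|\,dx\to 0$. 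One then cuts off the \emph{test function}: for $\psi_\delta\in C^\infty_c(\Sigma_\delta;[0,1])$ with $\psi_\delta=1$ near $\Sigma$ and $|\nabla\psi_\delta|\le 2/\delta$, Proposition~\ref{prop:convexinequalityMixed} applied to $(1-\psi_\delta)\varphi\in C^\infty_c(\R^3\setminus\Sigma)$ gives the inequality, and the only problematic term $\int_\Om\varphi\,\sigma:(u\odot\nabla\psi_\delta)\,dx$ is controlled by the Kohn--Temam condition. Thus hypothesis~(ii) is used for integrability of $\sigma$, not for building a global smooth admissible approximant.
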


\begin{proof}
Since $\sigma \in L^2(\Om; \mathbb{M}^3_\sym) $ satisfies $\Div\sigma \in L^2 (\Om; \R^3) $ and $\sigma_D \in L^\infty (\Om;\mathbb{M}^3_\sym )$ (because $\sigma \in \mathcal K$ implies $\sigma_D(x) \in K_D$ a.e. in $\Om$), we claim that $\sigma \in L^6 (\Om; \mathbb{M}^3_\sym)$. Indeed, arguing as in \cite[Proposition 6.1]{FG}, using the decomposition $\sigma = \sigma_D + \frac{1}{3} (\tr\sigma){\rm Id}$, we have that
$\frac13 \nabla (\tr\sigma)=\Div \sigma -\Div \sigma_D \in L^2(\Om;\R^3) + W^{-1,\infty}(\Om;\R^3)$, hence by the Sobolev embedding, 
$$\nabla(\tr\sigma) \in W^{-1,6} (\Om) + W^{-1, \infty} (\Om)  \subset W^{-1,6} (\Om).$$
Applying Ne\v cas Lemma (see \cite{Nec}), we infer that $\tr\sigma \in L^6(\Om)$, hence $\sigma \in L^6(\Om;\mathbb M^3_{\rm sym})$.
		
In particular, $\sigma \in  L^3(\Om;\mathbb M^3_{\rm sym})$, $\sigma_D \in L^\infty(\Om;\mathbb M^3_D)$, $\Div \sigma \in L^{3/2}(\Om;\R^3)$ and $\sigma\nu\in L^\infty(\Gamma_N;\R^3)$. These conditions turn out to be sufficient to apply \cite[Proposition 2.7]{KT} (with, in the notation of \cite{KT}, $n=3$, $p=3/2$ and $p^*=3$). Then, an immediate adaptation of the proof of \cite[Lemma 3.5]{KT} (using \cite[Proposition 2.7]{KT} instead of \cite[Corollary 2.8]{KT}) shows the validity of the so-called Kohn-Temam condition:
$$\lim_{\delta \to 0} \frac{1}{\delta}\int_{\Sigma_\delta} |\sigma| |u|\, dx =0,$$
where $\Sigma_\delta:=\Om \cap \{x \in \R^3 : \, {\rm dist}(x,\Sigma)<\delta\}$. We are thus in position to argue as in the proof of \cite[Theorem 6.5]{FG} to get the conclusion. Indeed, let $\psi_\delta \in C^\infty_c(\Sigma_\delta;[0,1])$ be a cut-off function such that $\psi_\delta=1$ in a neighborhood of $\Sigma$ and $|\nabla \psi_\delta|\leq 2/\delta$. Then, for all $\varphi \in C^\infty_c(\R^3)$ with $\varphi\geq 0$, we have 
\begin{multline*}
\langle[\sigma:p],(1-\psi_\delta)\varphi\rangle=-\int_\Omega (1-\psi_\delta)\varphi \sigma : e \,dx- \int_\Omega u \cdot {\rm div} \sigma \, (1-\psi_\delta)\varphi \, dx - \int_\Omega (1-\psi_\delta) \sigma \colon \big(u\odot \nabla \varphi\big) \, dx\\
+ \int_\Omega \varphi \sigma \colon \big(u\odot \nabla \psi_\delta\big) \, dx + \int_{ \Gamma_N}(1-\psi_\delta)\varphi g \cdot u \, d \Hs^{n-1}.
\end{multline*}
Since $\psi_\delta \searrow 0$ pointwise, and
$$\left|  \int_\Omega \varphi \sigma \colon \big(u\odot \nabla \psi_\delta\big) \, dx\right| \leq  \frac{2\|\varphi\|_{L^\infty(\Om)}}{\delta}\int_{\Sigma_\delta} |\sigma| |u|\, dx \to 0,$$
the dominated convergence Theorem allows us to pass to the limit as $\delta \to 0$, and get that
$$\langle[\sigma:p],(1-\psi_\delta)\varphi\rangle \to\langle[\sigma:p],\varphi\rangle.$$
On the other hand, since $(1-\psi_\delta)\varphi \in C^\infty_c(\R^3 \setminus \Sigma)$, Proposition \ref{prop:convexinequalityMixed} ensures that
$$\int_{\Om \cup \Gamma_D} \varphi \, dH(p) \geq \int_{\Om \cup \Gamma_D} (1-\psi_\delta)\varphi \, dH(p) \geq \langle[\sigma:p],(1-\psi_\delta)\varphi\rangle.$$
The conclusion follows passing to the limit as $\delta \to 0$.
\end{proof}
	
\section{Dynamic elasto-plasticity}
	
\subsection{The model with dissipative boundary conditions}
	
We consider a small strain dynamical perfect plasticity problem under the following assumptions:
	
\noindent $(H_3)$ {\bf The elastic properties.} We assume that the material is isotropic, which means that the constitutive law, expressed by Hooke's tensor, is given by
$${\bf A} \xi = \lbd ({\rm tr}\,\xi) {\rm Id} + 2 \mu \xi \quad {\textrm{for all } } \xi \in \mathbb{M}^n_{{\rm sym}},$$ 
where $\lbd$ and $ \mu$ are the Lam\'e coefficients satisfying $\mu > 0$ and $2 \mu + n  \lbd >0$. These conditions imply the existence of constants $\alpha>0$ and $\beta>0$ such that
$$\alpha |\xi|^2 \leq \mathbf A \xi:\xi \leq \beta |\xi|^2 \quad \text{ for all }\xi \in \mathbb{M}^n_{{\rm sym}}.$$
	
We define the following quadratic form
$$Q(\xi) := \frac{1}{2 }\mathbf A \xi:\xi = \frac{\lbd}{2 }  ({\rm tr}\,\xi)^2 + \mu \abs{\xi}^2 \quad {\textrm{for all } } \xi \in \mathbb{M}^n_{{\rm sym}}.  $$
If $e \in L^2 (\Om; \mathbb{M}^n_{{\rm sym}})$, we further define the elastic energy by
$$\Q (e) := \into{Q(e) \, dx}.$$
	
$(H_4)$ {\bf The dissipative boundary conditions.} Let $S \in L^\infty(\partial\Om;\mathbb M^n_{\rm sym})$ be a boundary matrix satisfying the conditions: there exists a constant $c>0$ such that
$$S(x)z\cdot z \geq c|z|^2 \quad \text{for $\mathcal H^{n-1}$-a.e. $x \in \partial\Om$} \text{ and for all }z \in \R^n.$$
	
$(H_5)$ {\bf The external forces.}  We assume the body is subjected to external body forces
$$f \in H^1(0,T; L^2(\Om; \R^n)).$$
	
$(H_6)$ {\bf The initial conditions.} Let $u_0 \in H^1(\Om; \R^n)$, $v_0 \in H^2(\Om; \R^n)$, $e_0 \in L^2(\Om;  \mathbb{M}^n_{{\rm sym}})$ and $p_0 \in  L^2(\Om;  \mathbb{M}^n_{{\rm sym}})$ be such that
\begin{equation*}
\begin{cases}
\sigma_0 := {\bf A} e_0 \in \mathcal K,\\
Eu_0 = e_0 + p_0 & \textrm{ in } \Om, \\
S v_0  + \sigma_0 \nu =0 & \textrm{ on } \partial \Om.
\end{cases}
\end{equation*}

In order to formulate the main result of \cite{BC}, we further need to introduce	the function $\psi:\partial\Om \times \R^n \to \R^+$ defined by
\begin{equation}\label{eq:psi}
\psi (x,z) = \inf_{w \in \R^n} \left\{\frac12 S(x)w\cdot w + H((w-z)\odot \nu(x))\right\} \text{ for $\mathcal H^{n-1}$-a.e. $x \in \partial\Om$ and all $z \in \R^n$,}
\end{equation}	
where $\nu(x)$ is the outer normal to $\Om$ at $x \in \partial\Om$. We recall  (see \cite[Remark 4.7]{BC}) that the differential of $\psi$ in the $z$-direction is given by
$$D_z \psi (x,z) = {\rm{P}}_{-{\mathbf K} \nu(x) } (S (x) z),$$  
where ${\rm P}_{-\mathbf K\nu(x)}$ is the orthogonal projection in $\R^n$ onto the closed and convex set $-\mathbf K\nu(x)$ with respect to the scalar product $(u,v) \in \R^n \times \R^n \mapsto \langle u,v\rangle_{S(x)^{-1}}:= S(x)^{-1} u \cdot v$. We further denote by $\| \cdot\|_{S(x)^{-1}}$ its associated norm. 

The following well posedness result with homogeneous dissipative boundary conditions has been established in \cite{BC}. 	
\begin{theorem}
\label{theorem:wellposednesslevelS}
Assume that assumptions $(H_1)$--$(H_6)$ hold. Then, there exists a unique triple $(u,e,p)$ such that
\begin{equation*}
\begin{cases}
u \in W^{2, \infty} (0,T; L^2 (\Om; \R^n)) \cap C^{0,1} ( \left[0, T\right]; BD (\Om)), \\
e \in W^{1, \infty} (0,T; L^2 (\Om ; \mathbb{M}^n_{{\rm sym}} )), \\
p \in C^{0,1} ( \left[0, T\right]; \mathcal{M} (\Om;\mathbb{M}^n_{{\rm sym}})) , \\
\end{cases}
\end{equation*}
\begin{equation*}
\sigma:= \mathbf{A} e \in L^\infty  (0,T; H( {\rm div}, \Om)), \quad \sigma \nu \in L^\infty (0,T; L^2 (\partial \Om; \R^n)),
\end{equation*}
and satisfying
\begin{enumerate}
\item The initial conditions:
$$u(0) = u_0, \quad \dot{u}(0) = v_0, \quad e(0)=e_0, \quad p(0) = p_0;$$
\item The additive decomposition: for all $t \in \left[0,T \right]$,
$$E u(t) = e(t) + p(t) \quad \textrm{in }\mathcal{M} (\Om;\mathbb{M}^n_{{\rm sym}});$$
\item The equation of motion:
$$\ddot {u} - \Div \sigma = f \quad \textrm{in }L^2 (0,T; L^2(\Om ; \R^n)); $$
\item The relaxed dissipative boundary condition:
$${\rm P}_{-\mathbf{K} \nu} (S \dot{u}) + \sigma \nu = 0 \quad \textrm{in }  L^2 (0,T; L^2 (\partial \Om; \R^n));$$
\item The stress constraint: for every $t \in \left[ 0, T \right]$,
$$\sigma (t) \in \mathbf{K} \quad \text{ a.e. in }\Om;$$
\item The flow rule: for a.e. $t \in \left[0, T \right]$,
$$H (\dot {p} (t) )= \left[ \sigma(t): \dot{p}(t) \right] \quad \textrm{in } \mathcal{M} (\Om);$$
\item The energy balance: for every $t \in \left[0, T \right]$
\begin{align}
&\frac{1}{2} \int_{\Om}{\abs{\dot{u} (t)}^2 \, dx} + \mathcal{Q}(e(t)) + \int_0^t  H(\dot{p}(s))(\Om)\, ds +   \intzt{\int_{\partial \Om}{\psi(x, \dot{u})\, d\Hs^{n-1}}\, ds} \nonumber \\ 
& \quad + \frac{1}{2} \intzt{\intpo{ S^{-1} (\sigma \nu) \cdot (\sigma \nu) \, d \Hs^{n-1}}\, ds}= \frac{1}{2} \int_{\Om}{\abs{v_0}^2 \, dx} + \mathcal{Q}(e_0) + \intzt{\into{f \cdot \dot{u}\, dx}\, ds}. 
\label{eq:energybalance}
\end{align}
\end{enumerate}
Moreover, the following uniform estimate holds
\begin{equation}
\norm{\ddot{u}}^2_{L^\infty (0,T; L^2 (\Om; \R^n))} + \norm{\dot{e}}^2_{L^\infty(0,T; L^2 (\Om; \mathbb{M}^n_{{\rm sym}}))}  \le C_* ,
\label{posteriori}
\end{equation}
for some constant $C_*>0$ depending on $\|u_0\|_{H^1(\Om;\R^n)}$, $\|v_0\|_{H^2(\Om;\R^n)}$, $\|e_0\|_{L^2(\Om;\mathbb M^n_{\rm sym})}$, $\|\sigma_0\|_{H({\rm div},\Om)}$ and $\|p_0\|_{L^2(\Om;\mathbb M^n_{\rm sym})}$, but which is independent of $S$.
\end{theorem}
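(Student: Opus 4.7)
The plan is to establish existence through an implicit time discretization combined with uniform energy estimates, and to obtain uniqueness via an energy argument applied to the difference of two solutions. The hyperbolic nature of the equation of motion together with the differential-inclusion structure of the flow rule make an Euler scheme natural, while the boundary term involving $\psi$ (convex in its second argument by construction as an inf-convolution) fits naturally into a variational incremental formulation.

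First, fixing a time step $\tau = T/N$, I would define approximate solutions $\{(u^k, e^k, p^k)\}_{k=0}^N$ inductively as the unique minimizer of the incremental functional
$$(u,e,p) \mapsto \frac{1}{2\tau^2} \into \abs{u - 2u^{k-1} + u^{k-2}}^2 \, dx + \mathcal{Q}(e) + \mathcal{H}(p - p^{k-1}) + \tau \intpo \psi\!\left(x, \tfrac{u - u^{k-1}}{\tau}\right) d\Hs^{n-1} - \into f^k \cdot u \, dx$$
over triples satisfying $E u = e + p$ in $\Om$. Strict convexity follows from $\mu > 0$, the coercivity of $S$ in $(H_4)$ together with \eqref{eq:coercH}, and positive definiteness of $\bf A$. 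Coercivity in $BD(\Om)$ relies on the kinetic and boundary terms (through the quadratic lower bound on $\psi$ coming from $S$) controlling the $L^2$ norm of $u$ and of its trace. The Euler--Lagrange equations yield a discrete equation of motion, the constraint $\sigma^k \in \mathbf K$, and the discrete relaxed boundary condition, where the identity $D_z\psi(x,z) = \mathrm{P}_{-\mathbf K\nu(x)}(S(x)z)$ plays the central role in translating $\partial_z \psi = -\sigma\nu$ into the stated relaxed form.

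Second, testing the Euler--Lagrange equations against natural variations gives a discrete analogue of the energy balance \eqref{eq:energybalance}, yielding uniform $L^\infty_t$ bounds on $(\dot u, e)$ in $L^2(\Om)$, on $\sigma\nu$ and on the trace of $\dot u$ in $L^2_t L^2(\partial\Om)$, and an $L^1_t$ bound on $\dot p$ in $\mathcal{M}(\Om; \M^n_\sym)$. The a posteriori bound \eqref{posteriori} is obtained by differencing two consecutive Euler--Lagrange equations and exploiting monotonicity of $z \mapsto D_z\psi(x, z)$ (from convexity of $\psi$) together with that of $\bf A$; initialization of this higher-order estimate relies crucially on the compatibility condition $S v_0 + \sigma_0\nu = 0$ in $(H_6)$, which ensures a discrete surrogate for $\ddot u(0)$ is bounded. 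Standard compactness together with lower semicontinuity of $\mathcal H$ (as a convex function of a measure) and of $\int_{\partial\Om} \psi(x,\cdot)\,d\Hs^{n-1}$ then allow passing to the limit as $\tau \to 0$, producing a candidate quadruple $(u,e,p,\sigma)$ satisfying items (1)--(5) and the inequality $\leq$ in \eqref{eq:energybalance}.

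The main obstacle is to promote this energy inequality to the equality (7), or equivalently to recover the flow rule (6). The inequality $H(\dot p(t)) \geq [\sigma(t):\dot p(t)]$ follows from a convexity argument in the spirit of Proposition \ref{prop:an1}, adapted to the dissipative (rather than pure Dirichlet or Neumann) boundary setting; here no boundary concentration of $p$ occurs, which simplifies matters. The reverse inequality is obtained by formally testing the equation of motion against $\dot u$, integrating by parts in space and time, and identifying the boundary contribution via the Fenchel identity characterizing the minimizer $w^* = \dot u - S^{-1}\mathrm{P}_{-\mathbf K\nu}(S\dot u)$ in the definition \eqref{eq:psi} of $\psi$. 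Uniqueness finally comes from a standard energy method: the difference of two solutions $(u_i,e_i,p_i,\sigma_i)$ satisfies a linear wave-type equation with a monotone boundary operator, and testing with the difference of velocities together with monotonicity of $\bf A$ and of $D_z\psi(x,\cdot)$ yields, via Gr\"onwall's lemma, $u_1 = u_2$, $e_1 = e_2$, $\sigma_1 = \sigma_2$, and $p_1 = p_2$.
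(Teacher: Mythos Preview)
The paper does not prove Theorem~\ref{theorem:wellposednesslevelS}: it is stated as a result already established in \cite{BC} (see the sentence immediately preceding the theorem, ``The following well posedness result with homogeneous dissipative boundary conditions has been established in \cite{BC}''). There is therefore no proof in the paper to compare your proposal against; the theorem is quoted here only because the rest of the paper uses the solutions $(u_\lambda,e_\lambda,p_\lambda)$ it provides, together with the energy balance \eqref{eq:energybalance} and the uniform estimate \eqref{posteriori}, as input for the asymptotic analysis in Theorem~\ref{thm:compactness}.

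That said, your outline is broadly the strategy one expects (and is, in essence, the one carried out in \cite{BC}): implicit time discretization via a convex incremental problem, discrete Euler--Lagrange conditions yielding the stress constraint and the relaxed boundary condition through $D_z\psi(x,z)={\rm P}_{-\mathbf K\nu(x)}(S(x)z)$, uniform energy bounds, a second-order estimate initialized by the compatibility $S v_0+\sigma_0\nu=0$, compactness, and closure of the flow rule via a convexity inequality of the type $H(\dot p)\ge[\sigma:\dot p]$ together with the reverse bound obtained by testing the equation of motion with $\dot u$. One small inaccuracy: you invoke a ``quadratic lower bound on $\psi$ coming from $S$'' to get coercivity on the trace, but $\psi(x,\cdot)$ has only linear growth at infinity (take $w=0$ in \eqref{eq:psi} and use that $H$ is one-homogeneous); coercivity in the incremental problem comes instead from the kinetic term for the $L^2(\Om)$-norm and from $\mathcal H$ plus \eqref{eq:coercH} for the $BD$ part, with the boundary term contributing only sublinearly.
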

	
\subsection{Derivation of mixed boundary condition}
	
Our aim is to show through an asymptotic analysis how it is possible to obtain homogeneous mixed boundary conditions starting from dissipative boundary conditions. We consider a boundary matrix of the form 
$$S(x)=S_\lambda(x):= \left( \lambda { {\bf 1}_{\Gamma_D}(x) + \frac{1}{\lbd} {\bf 1}_{\Gamma_N}(x)} \right){\rm Id}, \quad \lambda>0.$$		
\begin{remark}
{\rm Note that since 
$$\| \cdot \|_{S_\lambda(x)^{-1}}=\left( \lambda { {\bf 1}_{\Gamma_D} (x) + \frac{1}{\lbd} {\bf 1}_{\Gamma_N}(x) } \right)^{-1} | \cdot |,$$
for any $\lbd >0$ and all $x \in \partial \Om \setminus \Sigma$, the orthogonal projection ${\rm P}_{-\mathbf K\nu(x)}$ onto the closed and convex set $-\mathbf K\nu(x)$  with respect to the scalar product $\langle\cdot,\cdot\rangle_{S_\lambda(x)^{-1}}$ coincides with the orthogonal projection with respect to the canonical Euclidean scalar product of $\R^n$. It is in particular independent of $\lambda$. }
\end{remark}
	
We will need to strengthen assumption $(H_1)$ into
	
\noindent $(H'_1)$ {\bf Reference configuration.} Let $\Om \subset \R^n$ be a bounded open set with $C^3$ boundary. We assume that $\partial \Om$ is decomposed as the following disjoint union
$$\partial \Om = \Gamma_D \cup \Gamma_N \cup \Sigma,$$
where $\Gamma_D$ and $\Gamma_N$ are open sets in the relative topology of $\partial\Om$, and $\Sigma = \partial _{| \partial \Om}\Gamma_D = \partial _{| \partial \Om}\Gamma_N$ is a $(n-2)$-dimensional submanifold of class $C^3$. 
	
Moreover, the initial condition needs to be adapted to our mixed boundary conditions.
	
\noindent $(H'_6)$ {\bf The initial conditions.} Let $u_0 \in H^1(\Om; \R^n)$, $v_0 \in H^2(\Om; \R^n) $, $e_0 \in L^2(\Om;  \mathbb{M}^n_{{\rm sym}})$, $p_0 \in  L^2(\Om;  \mathbb{M}^n_{{\rm sym}})$ and $\sigma_0 := {\bf A} e_0 \in H^2(\Om;\M^n_\sym)$ be such that
$$\begin{cases}
Eu_0 = e_0 + p_0& \text{ in }\Om,\\
v_0 = 0& \text{ on }\Gamma_D,\\
\sigma_0 \nu =0 & \text{ on }\Gamma_N,\\
\sigma_0 + B(0,r) \subset {\bf K} & \text{ in }\Om \text{ for some }r>0.
\end{cases}$$
		
First, we are going to construct a sequence of initial data $(u^\lambda_0, v^\lambda_0, e^\lambda_0, p^\lambda_0)$ satisfying $(H_6)$ with $S=S_\lambda$, and approximating $(u_0,v_0,e_0,p_0)$ as $\lambda \to \infty$. This is the object of the following result.

\begin{lemma}
\label{lemma:6.1}
Let $n=2$, $3$. Under assumptions $(H'_1)$ and $(H'_6)$, for every $\lambda>0$, there exists $(v_0^\lbd, \sigma_0^\lbd) \in H^2(\Om;\R^n) \times \mathcal K $ such that $(v_0^\lbd ,\sigma_0^\lbd) \to (v_0,\sigma_0)$ strongly in $  H^2(\Om;\R^n) \times H(\Div,\Om)$ as $\lbd \to \infty$ and
\begin{equation}
\label{approximate:initialdata}
\left(  \lbd {\bf 1}_{\Gamma_D}  + \frac 1\lbd {\bf 1}_{\Gamma_N} \right) v_0^\lbd + \sigma_0^\lbd \nu = 0 \quad \mathcal H^{n-1}\textrm{-a.e. on $\partial \Om$}.
\end{equation}   
\end{lemma}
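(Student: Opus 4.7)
The plan is to seek $(v_0^\lbd,\sigma_0^\lbd)$ as small $H^2$-perturbations of $(v_0,\sigma_0)$ of the form
$$v_0^\lbd = v_0 + W_\lbd, \qquad \sigma_0^\lbd = \sigma_0 + \tau_\lbd,$$
where $W_\lbd \in H^2(\Om;\R^n)$ and $\tau_\lbd \in H^2(\Om;\M^n_\sym)$ are corrections of order $1/\lbd$. The key observation is that because $v_0=0$ on $\Gamma_D$ and $\sigma_0\nu=0$ on $\Gamma_N$, the mixed compatibility condition \eqref{approximate:initialdata} will be implied by the two \emph{unified} boundary conditions
$$W_\lbd = -\frac{\sigma_0\nu}{\lbd} \ \text{ on }\ \partial\Om, \qquad \tau_\lbd\nu = -\frac{v_0}{\lbd} \ \text{ on }\ \partial\Om.$$
A direct verification on each piece confirms this, the leftover terms involving $v_0|_{\Gamma_D}$ or $\sigma_0\nu|_{\Gamma_N}$ which vanish. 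This formulation entirely bypasses the delicate issue of the regularity of piecewise boundary data across the interface $\Sigma$.

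For $W_\lbd$, thanks to $(H'_1)$ and $(H'_6)$, the trace $\sigma_0\nu|_{\partial\Om}$ belongs to $H^{3/2}(\partial\Om;\R^n)$, and a standard trace lifting provides $W_\lbd \in H^2(\Om;\R^n)$ with $W_\lbd = -\sigma_0\nu/\lbd$ on $\partial\Om$ and $\|W_\lbd\|_{H^2} \le C/\lbd$. For $\tau_\lbd$, the subtle point is prescribing a \emph{normal} trace while preserving symmetry. The $C^3$ regularity of $\partial\Om$ allows one to extend the outer unit normal $\nu$ to a $C^2$ field $\tilde\nu$ in a tubular neighborhood of $\partial\Om$; defining then
$$(\tau_\lbd)_{ij} := -\frac{\chi}{\lbd}\big[\tilde v_{0,i}\,\tilde\nu_j + \tilde v_{0,j}\,\tilde\nu_i - (\tilde v_0 \cdot \tilde\nu)\,\tilde\nu_i\tilde\nu_j\big],$$
where $\tilde v_0$ is an $H^2$ extension of $v_0$ and $\chi \in C^\infty_c(\R^n)$ is a cut-off equal to one on $\partial\Om$ and supported in the tubular neighborhood, yields a symmetric $H^2$ tensor field with $\tau_\lbd\nu = -v_0/\lbd$ on $\partial\Om$ (the normal-tangential decomposition, using $|\nu|=1$, cancels the unwanted terms) and $\|\tau_\lbd\|_{H^2} \le C/\lbd$.

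Verification of the boundary condition is then direct: on $\Gamma_D$, using $v_0 = 0$,
$$\lbd v_0^\lbd + \sigma_0^\lbd\nu = \lbd W_\lbd + \sigma_0\nu + \tau_\lbd\nu = -\sigma_0\nu + \sigma_0\nu - v_0/\lbd = 0,$$
while on $\Gamma_N$, using $\sigma_0\nu=0$,
$$\frac{v_0^\lbd}{\lbd} + \sigma_0^\lbd\nu = \frac{v_0 + W_\lbd}{\lbd} + \tau_\lbd\nu = \frac{v_0}{\lbd} - \frac{\sigma_0\nu}{\lbd^2} - \frac{v_0}{\lbd} = 0.$$
The $H^2$ smallness of the corrections gives $v_0^\lbd \to v_0$ in $H^2(\Om;\R^n)$ and, via the embedding $H^2 \hookrightarrow H(\Div,\Om)$, also $\sigma_0^\lbd \to \sigma_0$ in $H(\Div,\Om)$. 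Finally, since $n \le 3$ gives $H^2\hookrightarrow L^\infty$, we have $\|\tau_\lbd\|_{L^\infty(\Om)} \to 0$, and combined with $\sigma_0 + B(0,r)\subset\mathbf K$ from $(H'_6)$, for $\lbd$ sufficiently large $\sigma_0^\lbd(x)\in\mathbf K$ for a.e. $x \in \Om$, i.e.\ $\sigma_0^\lbd \in \mathcal K$.

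The main obstacle I anticipate is producing $\tau_\lbd$ with the correct \emph{normal} trace while keeping it symmetric and $H^2$-regular; the explicit formula above handles this and uses in an essential way the $C^3$ regularity of $\partial\Om$ supplied by $(H'_1)$, which guarantees that $\tilde\nu$ is $C^2$ and that multiplication preserves $H^2$ regularity.
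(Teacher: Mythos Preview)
Your proof is correct and shares the paper's key strategic insight: impose the correction traces $W_\lbd=-\sigma_0\nu/\lbd$ and $\tau_\lbd\nu=-v_0/\lbd$ on the \emph{entire} boundary $\partial\Om$ rather than piecewise, so that the mixed compatibility condition follows from the separate vanishing of $v_0|_{\Gamma_D}$ and $\sigma_0\nu|_{\Gamma_N}$. The difference lies in how the symmetric tensor correction with prescribed normal trace is built. The paper solves the auxiliary Neumann problem $z_0-\Div(e(z_0))=0$ in $\Om$, $e(z_0)\nu=-v_0$ on $\partial\Om$, and invokes elliptic regularity (using the $C^3$ boundary and $v_0\in H^{3/2}(\partial\Om;\R^n)$) to obtain $z_0\in H^3$, hence $e(z_0)\in H^2$; then $\sigma_0^\lbd=\sigma_0+\lbd^{-1}e(z_0)$. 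Your construction is more direct and elementary: you write down an explicit algebraic formula for $\tau_\lbd$ in terms of a $C^2$ extension $\tilde\nu$ of the unit normal and a cut-off, and verify by hand that $\tau_\lbd\nu=-v_0/\lbd$ using $|\nu|=1$. This avoids elliptic regularity theory altogether, at the modest cost of the tubular-neighborhood machinery, and makes the role of the $C^3$ assumption (needed so that $\tilde\nu\in C^2$ and multiplication preserves $H^2$) very transparent. Both routes yield $\tau_\lbd\in H^2\hookrightarrow L^\infty$ for $n\le 3$, from which the inclusion $\sigma_0^\lbd\in\mathcal K$ follows for $\lbd$ large via $(H'_6)$, exactly as in the paper.
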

\begin{proof}
Since $\partial \Om$ has a $C^3$ boundary then its normal $\nu$ belongs to $C^2(\partial\Om;\R^n)$ and, thanks to the Trace Theorem in Sobolev spaces, the trace of $\sigma_0$ belongs to $H^\frac{3}{2}(\partial \Om;\mathbb M^n_{\rm sym})$. As a consequence, the product $\sigma_0 \nu$ belongs to $H^{\frac32}(\partial\Om;\R^n)$ and there exists an extension $\hat{v}_0 \in H^2(\Om; \R^n)$ whose trace on $\partial\Om$ coincides with $-\sigma_0 \nu$ with the estimate 
$$\|\hat v_0\|_{H^2(\Om;\R^n)} \leq C  \|\sigma_0\nu\|_{H^{3/2}(\partial\Om;\R^n)},$$
where $C>0$ is a constant only depending on $n$ and $\Om$. For each $\lbd>0$, let us define 
$$v_0^\lbd := v_0 + \lambda^{-1}\hat{v}_0  \in H^2(\Om; \R^n).$$
It follows 
that $v_0^\lbd \raw v_0$ strongly in $H^2(\Om; \R^n)$ as $\lbd \to  \infty$. Now, we consider $z_0 \in H^1(\Om;\R^n)$ as the unique weak solution of the boundary value problem
\begin{equation}
\label{eqlem:5.1}
\begin{cases}
z_0-{{\rm div}} (e(z_0)) =0 & {{\rm in}} \ \Om ,\\
e(z_0) \nu = - v_0 & {{\rm on}} \ \partial \Om.
\end{cases}
\end{equation}
According to Korn's inequality and the Lax-Milgram Lemma such a solution exists and is unique. Using that $\Om$ has a $C^3$-boundary and that $v_0 \in H^{\frac32}(\partial\Om;\R^n)$, elliptic regularity ensures that $z_0 \in H^3 (\Om; \R^n)$. Let us define
$$\sigma_ 0^\lbd := \sigma_0+\lbd^{-1} e(z_0)$$
In particular, $ \sigma_0^\lambda \to \sigma_0$ strongly in $H({\rm div},\Om)$ as $\lbd \to \infty$. On $\Gamma_D$, we observe that 
$$\lbd {v_0^\lbd}_{| \Gamma_D}   +{\sigma_0^\lbd \nu}_{| \Gamma_D} = \lbd {v_0}_{| \Gamma_D} + \hat{v_0}_{| \Gamma_D} + \sigma_0\nu_{| \Gamma_D}+ \frac{1}{\lbd} {e(z_0) \nu}_{| \Gamma_D} = 0,$$
where we have used the fact that $e(z_0) \nu = -v_0 = 0$ and $\hat v_0=-\sigma_0\nu$ on $\Gamma_D$.
Similarly, on $\Gamma_N$ we have
$$\frac{1}{\lbd} {v_0^\lbd}_{| \Gamma_N}   +{\sigma_0^\lbd \nu}_{| \Gamma_N} = \frac{1}{\lambda} {v_0}_{| \Gamma_N} +\frac{1}{\lbd^2} \hat{v_0}_{| \Gamma_N} + \sigma_0\nu_{| \Gamma_N}+ \frac{1}{\lbd} {e(z_0) \nu}_{| \Gamma_N} = 0,$$
where we have used the fact that $\hat v_0=-\sigma_0\nu=0$ and $e(z_0)\nu = -v_0$ on $\Gamma_N$. We conclude \eqref{approximate:initialdata} thanks to the fact that $\partial \Om = \Gamma_D \cup \Gamma_N \cup \Sigma$ and $\Hs^{n-1}(\Sigma)=0$. 
		
It remains to check that $\sigma_0^\lambda \in \mathbf K$ a.e. in $\Om$.  To this aim, we have by Sobolev imbedding (recall that $n=2$ or $3$) that $e(z_0) \in H^2(\Om;\M^n_\sym) \subset L^\infty(\Om;\M^n_\sym)$. Let $r>0$ be the constant given by the last property of hypothesis $(H'_6)$ and $\lambda>0$ large enough so that $\lambda^{-1} \|e(z_0)\|_{L^\infty(\Om;\M^n_\sym)} < r$. It thus follows that $\sigma_0^\lambda \in \sigma_0 + B(0,r) \subset \mathbf K$ a.e. in $\Om$.
\end{proof}
	
Given the initial data $(u_0,v_0^\lambda,e_0^\lambda:=\mathbf A^{-1}\sigma^\lambda_0,p_0^\lambda:=Eu_0-\mathbf A^{-1}\sigma_0^\lambda)$ satisfying $(H_6)$, we denote by $(u_\lambda,e_\lambda,p_\lambda)$ the associated solution given by Theorem \ref{theorem:wellposednesslevelS}. Our aim is to study the asymptotic behavior of the solutions $(u_\lambda,e_\lambda,p_\lambda)$ when $\lbd \to  \infty$ in order to recover Dirichlet ($\Gamma_N = \emptyset$), Neumann ($\Gamma_D = \emptyset$) {and mixed} boundary conditions in the other cases.
	
Our main result is the following:
\begin{theorem}
\label{thm:compactness}
Assume that $(H'_1)$, $(H_2)$, $(H_3)$, $(H_5)$ and  $(H'_6)$ hold. For each $\lambda>0$, let $(v_0^\lambda,\sigma_0^\lambda)$ be given by Lemma \ref{lemma:6.1}, and let $(u_\lambda,e_\lambda,p_\lambda)$ be the solution given by Theorem \ref{theorem:wellposednesslevelS} associated with the boundary matrix $S_\lambda$ defined in \eqref{eq:Slambda} and the initial data $(u_0,v_0^\lambda,e_0^\lambda:=\mathbf A^{-1}\sigma^\lambda_0,p_0^\lambda:=Eu_0-\mathbf A^{-1}\sigma_0^\lambda)$. Then,
$$\begin{cases}
u_\lambda \rightharpoonup u & \text{ weakly* in } W^{2,\infty}(0,T;L^2(\Om;\R^n)),\\
e_\lambda \rightharpoonup e & \text{ weakly* in } W^{1,\infty}(0,T;L^2(\Om;\mathbb M^n_{\rm sym})),\\
\sigma_\lambda \rightharpoonup \sigma & \text{ weakly* in } W^{1,\infty}(0,T;L^2(\Om;\mathbb M^n_{\rm sym})),\\
p_\lambda(t) \rightharpoonup p(t) & \text{ weakly* in }\mathcal M(\Om;\mathbb M^n_{\rm sym}) \text{ for all }t \in [0,T],
\end{cases}
$$
where $(u,e, p)$ is the unique triple satisfying 
$$
\begin{cases}
u \in W^{2,\infty}(0,T;L^2(\Omega;\R^n)) \cap C^{0,1}([0,T];BD(\Omega)),\\
e \in W^{1,\infty}(0,T;L^2(\Om;\mathbb M^n_{\rm sym})),\\
\sigma:=\mathbf A e \in W^{1,\infty}(0,T;L^2(\Om;\mathbb M^n_{\rm sym})) \cap L^\infty(0,T;H({\rm div},\Omega)),\\
p\in C^{0,1}([0,T];\mathcal M(\Om \cup \Gamma_D;\mathbb M^n_{\rm sym})),
\end{cases}
$$
together with
\begin{enumerate}
\item The initial conditions:
$$u(0) = u_0, \quad  \dot{u} (0) = v_0,\quad  e (0) = e_0, \quad p (0) = p_0; $$
\item The kinematic compatibility: for all $t \in \left[0, T \right]$,
$$\begin{cases}
E u(t) = e(t) + p(t) & \textrm{in $\Om$},\\
p(t) =- u(t)  \odot \nu \Hs^{n-1}  & \textrm{on $\Gamma_D$};
\end{cases}$$
			
\item The equation of motion:
$$\ddot{u} - {\rm{div}}\sigma =f \quad in \ L^2 (0,T;L^2(\Om; \R^n)); $$
\item The stress constraint: for every $t \in \left[0, T \right]$,
$$\sigma(t) \in \mathbf{K} \quad {\rm{a.e \ in \ }} \Om; $$
\item The boundary condition
$$ \sigma \nu = 0 \quad {\rm in } \quad L^2(0,T; L^2(\Gamma_N; \R^n)); $$
\item The flow rule: if one of the following conditions are satisfied:
\begin{itemize}
\item[(i)] Dirichlet case: $\Om=\Gamma_D$,
\item[(ii)] Neumann case: $\Om=\Gamma_N$,
\item[(iii)] Mixed case in dimension $n=2$: $\Gamma_D \neq \emptyset$, $\Gamma_N\neq \emptyset$ and $\Sigma$ finite,
\item[(iv)] Mixed case in dimension $n=3$: $\Gamma_D \neq \emptyset$, $\Gamma_N\neq \emptyset$ and 
$$\mathbf K=K_D \oplus (\R {\rm Id}):=\{\sigma  \in \mathbb M^3_{\rm sym} : \, \sigma_D \in K_D\},$$
for some compact and convex set $K_D \subset \mathbb M^3_D$ containing zero in its interior, 
\end{itemize}
then, for a.e. $t \in [0,T]$,
$$H (\dot{p}(t)) = [ \sigma(t): \dot{p}(t) ] \quad {\rm{in \ }} \mathcal{M}(\Om \cup \Gamma_D).$$
\end{enumerate} 
\end{theorem}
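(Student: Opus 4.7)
My starting point is the $\lambda$-uniform \emph{a priori} estimate \eqref{posteriori}, which remains uniform in $\lambda$ because the initial data $(v_0^\lambda,\sigma_0^\lambda)$ of Lemma \ref{lemma:6.1} converge strongly in the spaces that determine the constant $C_*$. Combined with the energy balance \eqref{eq:energybalance}, this yields uniform bounds on $\ddot u_\lambda$ and $\dot e_\lambda$ in $L^\infty(0,T;L^2)$, on $\dot u_\lambda$ in $L^\infty(0,T;L^2)$, on $\mathcal Q(e_\lambda(t))$, and on the three dissipation/boundary quantities
\[
\int_0^T \!\!\int_\Omega H(\dot p_\lambda)\,dx\,ds,\quad \int_0^T\!\!\int_{\partial\Omega}\psi_\lambda(x,\dot u_\lambda)\,d\mathcal H^{n-1}\,ds,\quad \int_0^T\!\!\int_{\partial\Omega}S_\lambda^{-1}(\sigma_\lambda\nu)\cdot(\sigma_\lambda\nu)\,d\mathcal H^{n-1}\,ds.
\]
The coercivity $H(q)\geq r|q|$ together with the additive decomposition give control on $\{p_\lambda\}$ in $C^{0,1}([0,T];\mathcal M(\Omega\cup\Gamma_D;\mathbb M^n_{\mathrm{sym}}))$ and on $\{u_\lambda\}$ in $C^{0,1}([0,T];BD(\Omega))$. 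Extracting weak-$*$ limits $(u,e,\sigma,p)$ and using linearity, I pass to the limit in Hooke's law, in $Eu=e+p$ in $\Omega$, in $\ddot u-\operatorname{div}\sigma=f$, in the pointwise constraint $\sigma(t)\in\mathbf K$, and in the initial conditions via Lemma \ref{lemma:6.1}. This is essentially the content of the (auxiliary) Lemma \ref{lem:compactness}.

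\textbf{Step 2: Boundary conditions.} On $\Gamma_N$ we have $S_\lambda^{-1}=\lambda\,\mathrm{Id}$, so the third energy term above forces
\[
\int_0^T\!\!\int_{\Gamma_N}|\sigma_\lambda\nu|^2\,d\mathcal H^{n-1}\,ds\leq\frac{C}{\lambda}\longrightarrow 0,
\]
which gives $\sigma_\lambda\nu\to 0$ strongly in $L^2(0,T;L^2(\Gamma_N;\R^n))$; combined with weak continuity of the normal trace on $H(\operatorname{div},\Omega)$-bounded sequences with $L^2$-controlled divergence, this yields $\sigma\nu=0$ on $\Gamma_N$. The Dirichlet-type condition is encoded in the kinematic compatibility $p(t)\mres\Gamma_D=-u(t)\odot\nu\,\mathcal H^{n-1}$ and is obtained through the Moreau--Yosida lower bound announced in the introduction (Lemma \ref{prop:relaxationdirichletpart}), namely
\[
\int_\Omega H(\dot p)\,dx+\int_{\Gamma_D}H(-\dot u\odot\nu)\,d\mathcal H^{n-1}\leq\liminf_{\lambda\to\infty}\Bigl(\int_\Omega H(\dot p_\lambda)\,dx+\int_{\partial\Omega}\psi_\lambda(x,\dot u_\lambda)\,d\mathcal H^{n-1}\Bigr).
\]
Applied together with $\dot u_\lambda\to\dot u$ in traces and the additive decomposition, this forces $\dot p(t)$ to pick up the singular part $-\dot u(t)\odot\nu\,\mathcal H^{n-1}$ on $\Gamma_D$; integration in time, using that $(H'_6)$ enforces $u_0=0$ on $\Gamma_D$ (since $p_0\in L^2$ has no singular part), gives the asserted kinematic compatibility for $p(t)$.

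\textbf{Step 3: Flow rule from the energy balance.} Taking the $\liminf$ in \eqref{eq:energybalance}, combining weak lower semicontinuity of $\frac12\|\dot u_\lambda(t)\|^2_{L^2}$ and $\mathcal Q(e_\lambda(t))$ with the lower bound from Step~2 and the nonnegativity of the fourth boundary term, produces the energy \emph{inequality}
\[
\frac12\!\int_\Omega|\dot u(t)|^2dx+\mathcal Q(e(t))+\!\int_0^t\!\!\Bigl(\!\int_\Omega\! H(\dot p)dx+\!\int_{\Gamma_D}\!\!H(-\dot u\odot\nu)d\mathcal H^{n-1}\Bigr)ds\leq\frac12\!\int_\Omega|v_0|^2dx+\mathcal Q(e_0)+\!\int_0^t\!\!\int_\Omega f\cdot\dot u\,dx\,ds.
\]
For the reverse inequality I test the equation of motion with $\dot u$ and integrate by parts in both space and time, reducing the question to the convexity inequality $H(\dot p(s))\geq[\sigma(s):\dot p(s)]$ in $\mathcal M(\R^n)$ and a control on the boundary term via $\sigma\nu=0$ on $\Gamma_N$ and the kinematic identity on $\Gamma_D$. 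This convexity inequality is exactly what Proposition \ref{prop:an1} supplies in the pure Dirichlet or pure Neumann cases~(i)--(ii), Proposition \ref{prop:n=2} in case~(iii), and Proposition \ref{prop:n=3} in case~(iv). The two inequalities together give an energy equality; differentiating in $t$ and reusing the convexity inequality pointwise converts it into the flow rule $H(\dot p(t))=[\sigma(t):\dot p(t)]$ in $\mathcal M(\Omega\cup\Gamma_D)$ for a.e.\ $t$, following the scheme of \cite{BC,DMDSM}.

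\textbf{Main obstacle and uniqueness.} The crux is the reverse energy inequality in the mixed cases: the general Proposition \ref{prop:convexinequalityMixed} only controls $[\sigma:\dot p]$ in $\mathcal M(\R^n\setminus\Sigma)$, leaving open the possibility of spurious energy concentration at the interface $\Sigma$. This is precisely what forces the restrictive geometric/structural hypotheses in (iii)--(iv), which upgrade the convexity inequality to $\mathcal M(\R^n)$ via Propositions \ref{prop:n=2}--\ref{prop:n=3}; without such upgrades the flow rule cannot be closed. Once the flow rule holds, uniqueness of the limiting system follows by a standard energy/Grönwall argument using convexity of $\mathcal Q$, monotonicity of the Prandtl--Reuss flow and linearity of the equation of motion, and this uniqueness promotes the subsequential convergences of Step~1 to convergence of the whole family $\{(u_\lambda,e_\lambda,p_\lambda,\sigma_\lambda)\}_{\lambda>0}$.
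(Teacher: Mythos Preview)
Your overall strategy matches the paper's: uniform bounds and weak compactness (your Step~1 is Lemma~\ref{lem:compactness}), the Neumann condition from the decay of $\int_{\Gamma_N}|\sigma_\lambda\nu|^2$, the lower energy inequality via Lemma~\ref{prop:relaxationdirichletpart}, the reverse inequality from Propositions~\ref{prop:an1}, \ref{prop:n=2}, \ref{prop:n=3}, and finally uniqueness.

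There is, however, a genuine confusion in Step~2 regarding the kinematic compatibility on $\Gamma_D$. You attempt to \emph{derive} the identity $p(t)\mres\Gamma_D=-u(t)\odot\nu\,\mathcal H^{n-1}$ from the convergence, and the argument you give does not work: $(H'_6)$ prescribes $v_0=0$ on $\Gamma_D$, not $u_0=0$; and the claim ``$\dot u_\lambda\to\dot u$ in traces'' is unavailable, since the $BD$ trace operator is not continuous under weak* convergence. In the paper this identity is not proved at all: the weak* limit $p(t)$ is first obtained only as a measure on $\Omega$, and is then \emph{extended by definition} to $\Gamma_D$ via $p(t)\mres\Gamma_D:=-u(t)\odot\nu\,\mathcal H^{n-1}$. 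The role of Lemma~\ref{prop:relaxationdirichletpart} is that its lower bound already contains the boundary contribution $\int_{\Gamma_D}H(-\hat u\odot\nu)\,d\mathcal H^{n-1}$, which after this extension becomes $H(\dot p)(\Gamma_D)$ and combines with the bulk term into $H(\dot p)(\Omega\cup\Gamma_D)$. So the correct step is simpler than what you propose.

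A smaller omission: passing from the snapshot lower bound of Lemma~\ref{prop:relaxationdirichletpart} to the time-integrated inequality requires the partition/Jensen argument followed by \cite[Theorem~7.1]{DMDSM} (to convert the $\mathcal H$-variation into $\int_0^t H(\dot p(s))(\Omega\cup\Gamma_D)\,ds$); you skip this, though it is standard.
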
 
	
As explained before, the solution $(u,e,p)$ to the previous boundary value problem will be obtained by means of an asymptotic analysis as $\lbd \to \infty$ of the solution  $(u_\lambda,e_\lambda,p_\lambda)$ of the dissipative boundary value in the Theorem \ref{theorem:wellposednesslevelS}. This analysis is based in the spirit of \cite[Theorem 5.1]{BM} in the antiplane case.
	
\subsection{Weak compactness and passing to the limit into linear equations}
	
We observe that the constant $C_*>0$ appearing in estimate \eqref{posteriori} of Theorem \ref{theorem:wellposednesslevelS} depends on the various norms $ \|u_0\|_{H^1(\Om;\R^n)}$, $\|v_0^\lambda\|_{H^2(\Om;\R^n)}$, $\|e_0^\lambda\|_{L^2(\Om;\mathbb M^n_{\rm sym})}$, $\|\sigma^\lambda_0\|_{H({\rm div},\Om)}$ and $\|p_0^\lambda\|_{L^2(\Om;\mathbb M^n_{\rm sym})}$ of the initial data. Since, by Lemma \ref{lemma:6.1}, these quantities are independent of $\lambda$, it follows that the constant $C_*$ is independent of $\lambda$ as well. This is essential to get uniform bounds on the sequence $\{(u_\lambda,e_\lambda,p_\lambda)\}_{\lambda>0}$ and then weak compactness thereof.
	
The following compactness result follows from standard argument as, e.g., in \cite[Section 5]{BM}. The weak convergences allow us to obtain, in the limit, the initial conditions, the kinetic compatibility, the equation of motion and the stress constraint, 
	
\begin{lemma}\label{lem:compactness}
Assume that $(H'_1)$, $(H_2)$, $(H_3)$, $(H_5)$ and  $(H'_6)$ hold. There exist a subsequence (not relabeled) and 
$$
\begin{cases}
u \in W^{2,\infty}(0,T;L^2(\Omega;\R^n)) \cap C^{0,1}([0,T];BD(\Omega)),\\
e \in W^{1,\infty}(0,T;L^2(\Om;\mathbb M^n_{\rm sym})),\\
\sigma\in W^{1,\infty}(0,T;L^2(\Om;\mathbb M^n_{\rm sym})) \cap L^\infty(0,T;H({\rm div},\Omega)),\\
p \in C^{0,1}([0,T];\mathcal M(\Om;\mathbb M^n_{\rm sym})),
\end{cases}
$$
such that as $\lambda \to \infty$,
$$
\begin{cases}
u_\lambda \rightharpoonup u & \text{ weakly* in } W^{2,\infty}(0,T;L^2(\Om;\R^n)),\\
e_\lambda \rightharpoonup e & \text{ weakly* in } W^{1,\infty}(0,T;L^2(\Om;\mathbb M^n_{\rm sym})),\\
\sigma_\lambda \rightharpoonup \sigma & \text{ weakly* in } W^{1,\infty}(0,T;L^2(\Om;\mathbb M^n_{\rm sym})),
\end{cases}
$$
and, for every $t \in [0,T]$,
$$
\begin{cases}
u_\lambda(t) \rightharpoonup u(t) & \text{ weakly in } L^2(\Om;\R^n),\\
u_\lambda(t) \rightharpoonup u(t) & \text{ weakly* in } BD(\Om),\\
\dot u_\lambda(t) \rightharpoonup \dot u(t) & \text{ weakly in } L^2(\Om;\R^n),\\
e_\lambda(t) \rightharpoonup e(t) & \text{ weakly in } L^2(\Om;\mathbb M^n_{\rm sym}),\\
\sigma_\lambda(t) \rightharpoonup \sigma(t) & \text{ weakly in } L^2(\Om;\mathbb M^n_{\rm sym}),\\
p_\lambda(t) \rightharpoonup p(t) & \text{ weakly* in }\mathcal M(\Om;\mathbb M^n_{\rm sym}).
\end{cases}
$$
Moreover, there hold: 
\begin{itemize}
\item the initial conditions: $ u(0) = u_0, \  \dot{u} (0) = v_0,\  e (0) = e_0, \ p (0) = p_0$;
\item the additive decomposition: for all $t \in [0, T]$,
$$E u(t) = e(t) + p(t) \quad \text{ in } \mathcal M(\Om;\mathbb M^n_{\rm sym});$$
\item the equation of motion: $\ddot{u} - {\rm{div}}\sigma=f$  in $L^2 (0,T;L^2(\Om; \R^n))$;
\item the stress constraint: for every $t \in \left[0, T \right]$,
$\sigma (t) =\mathbf A e(t) \in \mathbf{K}$ a.e  in $\Om$;
\item the Neumann condition: $\sigma\nu=0$ in $L^2(0,T;L^2(\Gamma_N;•\R^n))$.
\end{itemize}
\end{lemma}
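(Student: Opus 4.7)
The plan is to extract weak$^*$ limits from the a priori estimate \eqref{posteriori} and the energy balance \eqref{eq:energybalance}, and then pass to the limit in the linear ingredients of Theorem \ref{theorem:wellposednesslevelS}. The decisive observation is that, by Lemma \ref{lemma:6.1}, the norms $\|u_0\|_{H^1}$, $\|v_0^\lbd\|_{H^2}$, $\|e_0^\lbd\|_{L^2}$, $\|\sigma_0^\lbd\|_{H(\Div,\Om)}$ and $\|p_0^\lbd\|_{L^2}$ are bounded independently of $\lbd$, so that the constant $C_*$ in \eqref{posteriori} is itself $\lbd$-independent. This uniformity is what unlocks all the weak compactness below.

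\textbf{Uniform estimates.} From \eqref{posteriori} I obtain uniform $L^\infty(0,T;L^2)$ bounds on $\ddot u_\lbd$ and $\dot e_\lbd$, hence on $\dot\sigma_\lbd=\mathbf A\dot e_\lbd$. Integrating in time and using the boundedness of the initial data yields uniform bounds for $u_\lbd$ in $W^{2,\infty}(0,T;L^2(\Om;\R^n))$ and for $(e_\lbd,\sigma_\lbd)$ in $W^{1,\infty}(0,T;L^2(\Om;\M^n_\sym))$. The equation of motion then gives $\Div\sigma_\lbd=\ddot u_\lbd-f$ uniformly in $L^\infty(0,T;L^2(\Om;\R^n))$, so $\{\sigma_\lbd\}$ is bounded in $L^\infty(0,T;H(\Div,\Om))$. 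Using \eqref{eq:energybalance}, the coercivity $H(q)\ge r|q|$ and the uniform $BD$-in-time regularity of $u_\lbd$ inherited from Theorem \ref{theorem:wellposednesslevelS}, I also get uniform bounds on $u_\lbd$ in $C^{0,1}([0,T];BD(\Om))$ and on $p_\lbd$ in $C^{0,1}([0,T];\mathcal M(\Om;\M^n_\sym))$.

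\textbf{Passage to the limit.} Banach--Alaoglu provides a (not relabelled) subsequence realizing the stated weak$^*$ convergences. The equi-Lipschitz continuity of $u_\lbd$, $\dot u_\lbd$, $e_\lbd$, $\sigma_\lbd$ and $p_\lbd$ with values in duals of separable spaces, combined with a diagonal Helly-type selection, upgrades these to pointwise-in-time weak (respectively weak$^*$) convergences for every $t\in[0,T]$. The additive decomposition $Eu_\lbd(t)=e_\lbd(t)+p_\lbd(t)$ then passes to the limit in the sense of distributions; the equation of motion is linear and so weakly continuous; the constraint $\sigma_\lbd(t)\in\mathbf K$ a.e.\ is preserved under weak $L^2$ limits since $\mathcal K$ is convex and closed. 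The initial conditions follow from $u_\lbd(0)=u_0$ together with the strong convergences $v_0^\lbd\to v_0$, $e_0^\lbd\to e_0$, $p_0^\lbd\to p_0$ provided by Lemma \ref{lemma:6.1}.

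\textbf{Neumann condition and main obstacle.} The identification of the Neumann condition on $\Gamma_N$ comes directly from the boundary term in \eqref{eq:energybalance}: since $S_\lbd^{-1}=\lbd\,\mathrm{Id}$ on $\Gamma_N$, one gets
\begin{equation*}
\lbd \int_0^T \int_{\Gamma_N} |\sigma_\lbd\nu|^2 \, d\Hs^{n-1}\, ds \leq 2C_*,
\end{equation*}
so $\sigma_\lbd\nu\to 0$ strongly in $L^2(0,T;L^2(\Gamma_N;\R^n))$. Because $\sigma_\lbd\rightharpoonup\sigma$ weak$^*$ in $L^\infty(0,T;H(\Div,\Om))$ and the normal trace operator is continuous from $H(\Div,\Om)$ to $H^{-1/2}(\partial\Om;\R^n)$, the weak limit must coincide with $\sigma\nu$, hence $\sigma\nu=0$ in $L^2(0,T;L^2(\Gamma_N;\R^n))$. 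The main technical point of the lemma lies in the Helly-type extraction guaranteeing pointwise-in-time weak$^*$ convergence of the plastic strains $p_\lbd(t)$ (needed so that the additive decomposition holds at every $t\in[0,T]$); the genuine difficulty of the paper, namely the flow rule and the convexity inequality, is deliberately not addressed at this stage and is handled in a subsequent step using the duality results of Section \ref{sec:duality}.
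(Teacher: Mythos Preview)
Your proposal is correct and follows essentially the same route as the paper: uniform bounds from \eqref{posteriori} and \eqref{eq:energybalance} (with the $\lambda$-independence of $C_*$ coming from Lemma~\ref{lemma:6.1}), weak$^*$ compactness, an Ascoli--Arzel\`a/Helly argument for pointwise-in-time convergence of $p_\lambda$ via the equi-Lipschitz estimate obtained from the energy balance between two times, and identification of the Neumann trace via the $S_\lambda^{-1}$-boundary term combined with weak convergence of $\sigma_\lambda\nu$ in $H^{-1/2}$. The only cosmetic slip is that the bound on $\lambda\int_0^T\!\int_{\Gamma_N}|\sigma_\lambda\nu|^2$ comes from the right-hand side of the energy balance \eqref{eq:energybalance}, not from the constant $C_*$ of \eqref{posteriori}.
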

	
\begin{proof}
According to the energy balance \eqref{eq:energybalance} and estimate \eqref{posteriori}, we infer that
\begin{multline}
\|\dot{u}_\lambda\|_{L^\infty(0,T;L^2(\Om;\R^n))} +\|\sigma_\lambda\|_{L^\infty(0,T;L^2(\Om;\mathbb M^n_{\rm sym}))} + \|\dot p_\lambda\|_{L^1(0,T;\mathcal M(\Om;\mathbb M^n_{\rm sym}))}\\
+ \frac{1}{\sqrt\lbd} \|\sigma_\lambda \nu\|_{L^2(0,T;L^2({ \Gamma_D};\R^n))} { + \sqrt{\lbd} \|\sigma_\lambda \nu\|_{L^2(0,T;L^2({\Gamma_N};\R^n))} }\\
+  \int_0^T\int_{\partial \Om}{\psi_\lbd(x, \dot{u}_\lbd)\, d\Hs^{n-1}}\, ds \leq C, \label{eq:differenceenergies2}
\end{multline}
where $\psi_\lambda$ is given by \eqref{eq:psi} with $S=S_\lambda$, and
$$\norm{\ddot{u}_\lambda}^2_{L^\infty (0,T; L^2 (\Om; \R^n))} + \norm{\dot{e}_\lambda}^2_{L^\infty(0,T; L^2 (\Om; \mathbb{M}^n_{{\rm sym}}))}  \le C_*.$$
In both previous estimates, the constants $C>0$ and $C_*>0$ are independent of $\lbd$. Using that $u_\lambda \in W^{2,\infty}(0,T;L^2(\Om;\R^n))$ and $u_0\in L^2(\Om;\R^n)$, we get
$$\sup_{\lambda>0}\|u_\lambda\|_{W^{2,\infty}(0,T;L^2(\Om;\R^n))}<\infty,$$
and similarly, since $e_\lambda \in W^{1,\infty}(0,T;L^2(\Om;\mathbb M^n_{\rm sym}))$ and $e_0 \in L^2(\Om;\mathbb M^n_{\rm sym})$, 
$$\sup_{\lambda>0}\|e_\lambda\|_{W^{1,\infty}(0,T;L^2(\Om;\mathbb M^n_{\rm sym}))}<\infty.$$
We can thus extract a subsequence (not relabeled) and find $u \in W^{2,\infty}(0,T;L^2(\Om;\R^n))$ and $e\in W^{1,\infty}(0,T;L^2(\Om;\mathbb M^n_{\rm sym}))$ such that, as $\lambda \to \infty$,
$$
\begin{cases}
u_\lambda \rightharpoonup u\quad \text{ weakly* in } W^{2,\infty}(0,T;L^2(\Om;\R^n)),\\
e_\lambda \rightharpoonup e \quad \text{ weakly* in } W^{1,\infty}(0,T;L^2(\Om;\mathbb M^n_{\rm sym})).
\end{cases}
$$
Setting $\sigma:=\mathbf Ae\in W^{1,\infty}(0,T;L^2(\Om;\mathbb M^n_{\rm sym}))$ we also have
$$\sigma_\lambda \rightharpoonup \sigma\quad \text{ weakly* in } W^{1,\infty}(0,T;L^2(\Om;\mathbb M^n_{\rm sym})),$$
and using the equation of motion leads to 
$${\rm div}\sigma_\lambda=\ddot u_\lambda-f \rightharpoonup \ddot u-f   \quad \text{ weakly* in  }L^\infty(0,T;L^2(\Om;\R^n)).$$
By uniqueness of the distributional limit, we infer that $\Div \sigma=\ddot u-f  \in L^\infty(0,T;L^2(\Om;\R^n))$ and, thus, $\sigma \in L^\infty(0,T;H({\rm div},\Om))$.
		
Owing to Ascoli-Arzela Theorem, for every $t \in [0,T]$,
$$
\begin{cases}
u_\lambda(t) \rightharpoonup u(t) & \text{ weakly in } L^2(\Om;\R^n),\\
\dot u_\lambda(t) \rightharpoonup \dot u(t) & \text{ weakly in } L^2(\Om;\R^n),\\
e_\lambda(t) \rightharpoonup e(t) & \text{ weakly in } L^2(\Om;\mathbb M^n_{\rm sym}),\\
\sigma_\lambda(t) \rightharpoonup \sigma(t) & \text{ weakly in } L^2(\Om;\mathbb M^n_{\rm sym}).
\end{cases}
$$
We now derive weak compactness on the sequence $\{p_\lambda\}_{\lambda>0}$ of plastic strains. Thanks to the energy balance between two arbitrary times $0 \leq t_1 \leq t_2 \leq T$ together with \eqref{eq:coercH},
\begin{align}
r\int_{t_1}^{t_2}|\dot p_\lambda(s)|(\Om)\, ds \leq  \intot{H(\dot{p}_\lbd (s))(\Om)\, ds} & \leq & \frac{1}{2} \int_{\Om}(\dot u_\lambda(t_1)-\dot u_\lambda(t_2))\cdot (\dot u_\lambda(t_1)+\dot u_\lambda(t_2))\, dx \nonumber\\
&&+ \frac{1}{2} \int_{\Om}(\sigma_\lambda(t_1)-\sigma_\lambda(t_2)): (e_\lambda(t_1)+e_\lambda(t_2))\,dx\nonumber\\
&& +\int_{t_1}^{t_2}\int_\Om f\cdot \dot u_\lambda\, dx \, ds. \label{eq:differenceenergies}
\end{align}
By $(H_5)$, using that $f \in L^\infty(0,T;L^2(\Om;\R^n))$, that $\{\dot u_\lambda\}_{\lambda>0}$ is bounded in $L^\infty(0,T;L^2(\Om;\R^n))$ and that $\{\sigma_\lambda\}_{\lambda>0}$ is bounded in $L^\infty(0,T;L^2(\Om;\mathbb M^n_{\rm sym}))$, we can find a constant $C>0$ independent of $\lambda$ such that
$$|p_\lambda(t_1)-p_\lambda(t_2)|(\Om) \leq \int_{t_1}^{t_2}|\dot p_\lambda(s)|(\Om)ds\leq C(t_2-t_1).$$ 
Applying Ascoli-Arzela Theorem, we extract a further subsequence (independent of time) and find $p \in C^{0,1}([0,T];\mathcal M(\Om;\mathbb M^n_{\rm sym}))$ such that for all $t \in [0,T]$,
$$p_\lambda(t) \rightharpoonup p(t) \quad \text{ weakly* in }\mathcal M(\Om;\mathbb M^n_{\rm sym}).$$
		
Using the additive decomposition $Eu_\lambda=e_\lambda+p_\lambda$ in $\Om$, the previously established weak convergences show that $u \in C^{0,1}([0,T];BD(\Om))$ and, for all $t \in [0,T]$,
$$u_\lambda(t) \rightharpoonup u(t) \quad \text{ weakly* in }BD(\Om).$$		
		
It is now possible to pass to the limit in the initial condition  
$$ u(0) = u_0, \quad  \dot{u}(0) = v_0,\quad  e(0) = e_0, \quad p(0) = p_0,$$
in the additive decomposition:  for all $t \in [0, T]$,
$$E u(t) = e(t) + p(t) \quad \textrm{in $\mathcal M(\Om;\mathbb M^n_{\rm sym})$},$$
and in the equation of motion
$$\ddot{u} - {\rm{div}}\sigma =f \quad \text{ in } L^2 (0,T;L^2(\Om; \R^n)).$$
The stress constraint being convex, hence closed under weak $L^2(\Om;\mathbb M^n_{\rm sym})$ convergence, we further obtain that  for every $t \in [0, T ]$, $\sigma(t) \in \mathbf{K}$ a.e in $\Om$.
	
It remains to show the Neumann boundary condition $\sigma \nu = 0$ on $\Gamma_N$.  Since $\sigma_\lambda \rightharpoonup \sigma$ weakly in $L^2(0,T;H(\Div,\Om))$, we deduce that $\sigma_\lambda\nu \rightharpoonup \sigma\nu$ weakly in $L^2(0,T;H^{-1/2}(\partial\Om;\R^n))$. On the other hand, using estimate \eqref{eq:differenceenergies2}, we have
$$ \|\sigma_\lambda \nu  \|_{L^2(0,T;L^2(\Gamma_N;\R^n))} \leq  \frac {C}{\sqrt\lambda} \to 0,$$
as $\lambda\to \infty$, hence  $\sigma \nu=0$ in $L^2(0,T;L^2(\Gamma_N;\R^n))$.
\end{proof}

\subsection{Flow rule}
It remains to prove the flow rule, which will be performed by passing to the limit in the energy balance obtained in the Theorem \ref{theorem:wellposednesslevelS}, namely, for all $t\in [0,T]$, 
\begin{multline}
\frac{1}{2} \int_{\Om}{ \abs{\dot{u}_\lbd (t)}^2\, dx} + \into  {Q}(e_\lbd(t))\, dx  + \int_0^t  H (\dot p_\lbd(s))(\Om)\,ds +\int_0^t \int_{\partial \Om}  \psi_\lbd(x, \dot{u}_\lbd)\, d\Hs^{n-1}\, ds \\
\leq \frac{1}{2} \int_{\Om}{\abs{v_0}^2 \, dx} + \into  {Q}(e_0) \, dx + \int_0^t \into{f\cdot \dot{u}_\lbd \, dx}\, ds .\label{eq:inqdifferenceenergies}
\end{multline}
The first two terms will easily pass to the lower limit by lower semicontinuity of the norm with respect to weak $L^2$-convergence. The main issue is to pass to the (lower) limit in both last terms in the left-hand side of the previous inequality. The following  result will enable one to obtain a lower bound.
\begin{lemma}
\label{prop:relaxationdirichletpart}
Let $\lk (\hat u_\lbd, \hat e_\lbd, \hat p_\lbd) \rk_{\lambda>0} \subset [BD(\Omega) \cap L^2(\Omega;\R^n)] \times L^2(\Omega;\mathbb M^n_{\rm sym}) \times \mathcal M(\Omega;\mathbb M^n_{\rm sym})$ be such that $E\hat u_\lambda=\hat e_\lambda+\hat p_\lambda$ in $\Om$, 
and
$$
\begin{cases}
\hat u_\lambda \rightharpoonup \hat u & \text{ weakly in } L^2(\Om;\R^n),\\
\hat u_\lambda \rightharpoonup \hat u & \text{ weakly* in } BD(\Om),\\
\hat e_\lambda \rightharpoonup \hat e & \text{ weakly in } L^2(\Om;\mathbb M^n_{\rm sym}),\\
\hat p_\lambda \rightharpoonup \hat p & \text{ weakly* in }\mathcal M( \Om ;\mathbb M^n_{\rm sym}),
\end{cases}
$$
as $\lambda \to \infty$, for some  $(\hat u, \hat e, \hat p) \in [BD(\Omega) \cap L^2(\Omega;\R^n)] \times L^2(\Omega;\mathbb M^n_{\rm sym}) \times \mathcal M(\Om;\mathbb M^n_{\rm sym})$. Then, 
\begin{equation}
\label{eq:rd1}
  H(\hat p)(\Omega)+\int_{{\Gamma_D}}  H(-\hat u \odot \nu)\, d\mathcal H^{n-1}\le \liminf_{\lbd \rightarrow \infty}{\left(  H(\hat p_\lambda)(\Omega) + \int_{\partial \Om}{ \psi_\lbd(x,\hat u_\lbd)\, d \Hs^{n-1}} \right)}. 
\end{equation} 
\end{lemma}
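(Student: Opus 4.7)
The plan is to derive the lower bound by a duality argument: for every smooth test stress $\tau$ with $\tau\nu=0$ on $\Gamma_N$, I will produce a linear lower bound on the right-hand side of~\eqref{eq:rd1} that is compatible with the weak $L^2$ convergences of $\hat u_\lbd$ and $\hat e_\lbd$, and then take the supremum over such $\tau$.

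The key ingredient is a Fenchel-type representation of $\psi_\lbd$. Writing $H((w-z)\odot\nu)=\sup_{\tau\in\mathbf K}(\tau\nu)\cdot(w-z)$ and computing the Fenchel conjugate of $\psi_\lbd(x,\cdot)$ as $\tfrac{1}{2 s_\lbd(x)}|\cdot|^2 + I_{-\mathbf K\nu(x)}$, where $s_\lbd(x):=\lbd \mathbf{1}_{\Gamma_D}(x)+\lbd^{-1}\mathbf{1}_{\Gamma_N}(x)$, yields
\[
\psi_\lbd(x,z)=\sup_{\tau\in\mathbf K}\Bigl\{-(\tau\nu)\cdot z-\tfrac{|\tau\nu|^2}{2 s_\lbd(x)}\Bigr\}.
\]
In particular, for every $\tau\in\mathbf K$, $\psi_\lbd(x,z)\ge-(\tau\nu)\cdot z - \tfrac{1}{2\lbd}|\tau\nu|^2$ on $\Gamma_D$, while on $\Gamma_N$ the choice $\tau=0$ gives $\psi_\lbd\ge 0$. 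Fix now $\tau\in C^1(\overline{\Om};\mathbb M^n_{\rm sym})$ with $\tau(x)\in\mathbf K$ for all $x$ and $\tau\nu=0$ on $\Gamma_N$. Combining $H(\hat p_\lbd)(\Om)\ge\into \tau:d\hat p_\lbd$ with the above bound on $\psi_\lbd$, then using $E\hat u_\lbd=\hat e_\lbd+\hat p_\lbd$ and integrating by parts in $BD(\Om)$ (the $(\tau\nu)\cdot\hat u_\lbd$ contributions on $\Gamma_D$ cancel, and those on $\Gamma_N$ vanish by assumption), one obtains
\[
H(\hat p_\lbd)(\Om)+\intpo\psi_\lbd(x,\hat u_\lbd)\,d\Hs^{n-1}\ge -\into \Div\tau\cdot\hat u_\lbd\,dx-\into \tau:\hat e_\lbd\,dx-\tfrac{C(\tau)}{\lbd}.
\]
The weak $L^2$ convergences of $\hat u_\lbd$ and $\hat e_\lbd$ allow immediate passage to the limit, and integrating by parts back using $\hat u\in BD(\Om)$ with $E\hat u=\hat e+\hat p$ reverses the manipulation to yield
\[
\liminf_{\lbd\to\infty}\Bigl(H(\hat p_\lbd)(\Om)+\intpo\psi_\lbd(x,\hat u_\lbd)\,d\Hs^{n-1}\Bigr)\ge\into \tau:d\hat p+\int_{\Gamma_D}\tau:(-\hat u\odot\nu)\,d\Hs^{n-1}.
\]

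To conclude I take the supremum over admissible $\tau$. Define the Radon measure $\mu:=\hat p\mres\Om + (-\hat u\odot\nu)\Hs^{n-1}\mres\Gamma_D\in\mathcal M(\Om\cup\Gamma_D;\mathbb M^n_{\rm sym})$. Since its two components are mutually singular and $H$ is positively one-homogeneous, $\int_{\Om\cup\Gamma_D}H(d\mu)$ coincides with the left-hand side of~\eqref{eq:rd1}, and the classical duality for convex functions of measures on the locally compact space $\Om\cup\Gamma_D$ gives
\[
\int_{\Om\cup\Gamma_D}H(d\mu)=\sup_{\phi\in C_c(\Om\cup\Gamma_D;\mathbf K)}\int_{\Om\cup\Gamma_D}\phi:d\mu.
\]
Any such $\phi$ has compact support in the relative topology of $\Om\cup\Gamma_D$, and hence vanishes in a neighbourhood of $\Sigma\cup\Gamma_N$ inside $\overline{\Om}$; extension by zero produces a continuous function on $\overline{\Om}$, and mollification (which preserves $\mathbf K$-valuedness since $0\in\mathbf K$ and $\mathbf K$ is convex) provides smooth admissible test stresses $\phi_\delta\in C^\infty(\overline{\Om};\mathbf K)$ satisfying $\phi_\delta\nu=0$ on $\Gamma_N$ as soon as $\delta$ is smaller than the distance between $\mathrm{supp}(\phi)$ and $\Sigma\cup\Gamma_N$. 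Sending $\delta\to 0$ and then taking the supremum over $\phi$ gives~\eqref{eq:rd1}. The delicate point of the proof lies precisely in this last construction: the Neumann condition $\tau\nu=0$ seems too restrictive to reach the full Dirichlet contribution $\int_{\Gamma_D}H(-\hat u\odot\nu)\,d\Hs^{n-1}$, but viewing the bulk and Dirichlet parts jointly as a single measure on the locally compact set $\Om\cup\Gamma_D$ encodes this constraint automatically, since every compactly supported admissible test field there vanishes near $\Gamma_N$.
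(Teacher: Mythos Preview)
Your approach is correct in spirit and genuinely different from the paper's. The paper first realises the infimum defining $\psi_{\lbd_k}$ by a boundary field $v_k$, replaces $H$ by its Moreau--Yosida regularisation $H_\mu$ (which is Lipschitz, so the error $\hat u_{\lbd_k}-v_k\to 0$ in $L^2(\Gamma_D)$ can be absorbed), extends everything by zero across $\Gamma_D$ and invokes Reshetnyak lower semicontinuity for the continuous integrand $H_\mu$, and finally lets $\mu\to\infty$ by monotone convergence. Your route bypasses both Moreau--Yosida and Reshetnyak: the Fenchel lower bound $\psi_\lbd(x,z)\ge -(\tau\nu)\cdot z-\tfrac{|\tau\nu|^2}{2s_\lbd}$ linearises the boundary energy directly, and the duality formula $H(\mu)(\Om\cup\Gamma_D)=\sup_{\phi\in C_c(\Om\cup\Gamma_D;\mathbf K)}\int\phi:d\mu$ recovers the nonlinear left-hand side in one stroke. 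This is more elementary and arguably cleaner; the paper's method, on the other hand, avoids any approximation of test stresses and handles the passage to the limit via a single lower-semicontinuity theorem.

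There is one genuine technical gap in your last step. After extending $\phi\in C_c(\Om\cup\Gamma_D;\mathbf K)$ by zero to $\overline\Om$ you obtain a continuous function on $\overline\Om$, but to mollify you must extend further to $\R^n$. The extension by zero outside $\overline\Om$ (the only one that automatically preserves $\mathbf K$-valuedness and justifies your parenthetical remark) is \emph{discontinuous} across the portion of $\Gamma_D$ where $\phi\neq 0$, and the standard mollification $\phi_\delta=\rho_\delta*\phi$ then satisfies $\phi_\delta(x)\to \tfrac12\phi(x)$ for $\Hs^{n-1}$-a.e.\ $x\in\Gamma_D$ (the density of $\Om$ at a Lipschitz boundary point), not $\phi(x)$. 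Hence $\int_{\Gamma_D}\phi_\delta:(-\hat u\odot\nu)\,d\Hs^{n-1}$ converges to only half of what you need. The fix is standard: work in a finite cover of $\partial\Om$ by Lipschitz charts, translate $\phi$ slightly \emph{inward} along the cone direction before convolving (the technique behind \cite[Lemma~2.3]{DMDSM}). Inward translation keeps values in $\mathbf K$, keeps the function zero near $\overline{\Gamma_N}$ for small parameters, and makes the translated function defined on a full neighbourhood of $\overline\Om$ so that mollification converges uniformly on $\overline\Om$. With this adjustment your argument goes through.
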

	
\begin{proof}
Without loss of generality, we assume that the right hand side of (\ref{eq:rd1}) is finite. Let $(\lambda_k)_{k\in \N}$ be such that $\lbd_ k \nearrow \infty$ and
$$\liminf_{\lbd \rightarrow  \infty}{\left(  H(\hat p_\lambda)(\Omega) + \int_{\partial \Om}{ \psi_\lbd(x,\hat u_\lbd)\, d \Hs^{n-1}} \right)}= \lim_{k \rightarrow  \infty}{\left( H(\hat p_{\lbd_k})(\Omega)+ \int_{\partial \Om}{ \psi_{\lbd_k}(x,\hat u_{\lbd_k})\, d \Hs^{n-1}} \right)}.$$
As a consequence, there exists a constant $c>0$ (independent of $k$) such that 
$$\int_{\partial \Om}{ \psi_{\lbd_k}(x,\hat u_{\lbd_k})\, d \Hs^{n-1}}\leq c$$ 
for all $k\in \mathbb N$. By definition \eqref{eq:psi} of $\psi_\lambda$ (see also \cite[Lemma 4.9]{BC}), there exists a function $v_{k} \in L^2 (\partial \Om; \R^n)$ such that
\begin{eqnarray*}
\int_{\partial \Om}{\psi_{\lbd_k}(x,\hat u_{\lbd_k})\, d \Hs^{n-1}} & =& \frac{1}{2} \int_{\partial \Om} S_{\lambda_k}(\hat u_{\lbd_k} - v_k)\cdot (\hat u_{\lbd_k} - v_k)\,  d \Hs^{n-1} +  \int_{\partial \Om}{H(-v_{k} \odot \nu ) \, d \Hs^{n-1}}\\
& \geq & \frac{\lambda_k}{2} \int_{\Gamma_D}|\hat u_{\lbd_k} - v_k|^2 \,  d \Hs^{n-1} +  \int_{\partial \Om}{ H(-v_{k} \odot \nu ) \, d \Hs^{n-1}}.
\end{eqnarray*}
By nonnegativity of $H$, we infer that $\hat u_{\lbd_k} - v_{k}\to 0$ in $L^2(\Gamma_D;\R^n)$ as $k \to \infty$. Moreover  
\begin{align}
&H(\hat p_{\lbd_k})(\Omega) + \int_{\partial \Om}{ \psi_{\lbd_k}(x,\hat u_{\lbd_k})\, d \Hs^{n-1}} \nonumber  \\
& \qquad \ge  H(\hat p_{\lbd_k})(\Omega)  +  \int_{\Gamma_D}{ H(-v_{k} \odot \nu)\, d \Hs^{n-1}} \nonumber \\ 
& \qquad \ge  H_\mu(\hat p_{\lbd_k})(\Omega) +  \int_{{ \Gamma_D}}{ H_\mu(-v_{k} \odot \nu)\, d \Hs^{n-1}},\label{eq:fre1}
\end{align}
where $H_\mu:\mathbb M^n_{\rm sym} \to \R^+$ is the Moreau--Yosida transform of $H$ (see \cite[Lemma 1.61]{AFP} or \cite[Lemma 5.30]{FL}), defined by
$$H_\mu(p) := \inf_{q \in \mathbb M^n_{\rm sym} }\{H(q) + \mu \abs{p-q}\} \quad \text{ for all }p \in \mathbb M^n_{\rm sym} .$$
We recall that $H_\mu$ of $H$ enjoys the following properties:
\begin{enumerate}
\item For all $\mu > 0$ we have that $H_\mu \le H$;
\item The function $H_\mu$ is $\mu$-Lipschitz;
\item The function $H_\mu$ is convex as the inf-convolution between the proper convex functions $H$ and $\mu| \cdot |$ (see e.g. \cite[Theorem 5.4]{R});
\item For all $p \in \mathbb M^n_{\rm sym}$, $H_\mu (p) \to H(p)$  as $\mu \rightarrow \infty$.
\end{enumerate}
		
By the $\mu$-Lipschitz continuity of $H_\mu$, adding and subtracting the term $ \int_{\Gamma_D}{H_\mu (-\hat u_{\lbd_k} \odot \nu)\, d \Hs^{n-1}}$ in (\ref{eq:fre1}) yields
\begin{align}
& H(\hat p_{\lbd_k})(\Omega)+ \int_{\partial \Om}{\psi_{\lbd_k}(x,\hat u_{\lbd_k})\, d \Hs^{n-1}} \nonumber \\ 
& \ge  H_\mu(\hat p_{\lbd_k})(\Omega)+ \int_{{\Gamma_D}}{H_\mu(-\hat u_{\lbd_k} \odot \nu)\, d \Hs^{n-1}} -\mu \int_{ \Gamma_D}{\abs{\hat u_{\lbd_k} - v_{k}}  \, d\Hs^{n-1}} .\label{eq:fr3}
\end{align}
Passing to the limit as $k \to \infty$ in \eqref{eq:fr3}, we obtain
\begin{align}
&\lim_{k \rightarrow \infty}{\left( H(\hat p_{\lbd_k})(\Omega) + \int_{\partial \Om}{\psi_{\lbd_k}(x,\hat u_{\lbd_k})\, d \Hs^{n-1}} \right)} \nonumber \\
& \quad \ge \liminf_{k \rightarrow \infty}{\left(  H_\mu(\hat p_{\lbd_k})(\Omega)+ \int_{\Gamma_D}{ H_\mu(-\hat u_{\lbd_k} \odot \nu)\, d \Hs^{n-1}} \right)}.		\end{align}
Let $U \subset \R^N$ be an open set such that $\Gamma_D=U \cap \partial\Om$, and let $\tilde \Om:=\Om \cup U$. We extend $(\hat u_\lambda,\hat e_\lambda,\hat p_\lambda)$ to $\tilde \Om$ as
$$\tilde u_\lambda:=
\begin{cases}
\hat u_\lambda & \text{ in }\Om,\\
0 & \text{ in }\tilde \Om \setminus \Om,
\end{cases}
\qquad
\tilde e_\lambda:=
\begin{cases}
\hat e_\lambda & \text{ in }\Om,\\
0 & \text{ in }\tilde \Om \setminus \Om,
\end{cases}
$$
and
$$\tilde p_\lambda:=E\tilde u_\lambda-\tilde e_\lambda=\hat p_\lambda \res \Om - \hat u_\lambda \odot \nu \mathcal H^{n-1} \res {\Gamma_D}.$$
Similarly, we set 
$$\tilde u:=
\begin{cases}
\hat u & \text{ in }\Om,\\
0 & \text{ in }\tilde \Om \setminus \Om,
\end{cases}
\qquad
\tilde e:=
\begin{cases}
\hat e & \text{ in }\Om,\\
0 & \text{ in }\tilde \Om \setminus \Om.
\end{cases}
$$
	
Note that $\tilde p_\lambda \rightharpoonup \tilde p$ weakly* in $\mathcal M(\tilde\Om;\mathbb M^n_{\rm sym})$ with $\tilde p=E\tilde u-\tilde e=\hat p \res \Om - \hat u \odot \nu \mathcal H^{n-1} \res { \Gamma_D}$. Using that $H_\mu$ is a continuous, convex and positively one homogeneous function with $H_\mu(0)=0$, we can apply Reshetnyak's lower semicontinuity Theorem (see \cite[Theorem 2.38]{AFP}) to get that
\begin{multline*}
\liminf_{k \rightarrow \infty}{\left(  H_\mu(\hat p_{\lbd_k})(\Omega)+ \int_{{ \Gamma_D}}{H_\mu(-\hat u_{\lbd_k} \odot \nu)\, d \Hs^{n-1}} \right)}\\
=\liminf_{k \rightarrow \infty}  H_\mu (\tilde p_{\lbd_k})(\tilde\Omega) \geq  H_\mu (\tilde p)(\tilde \Omega)\\
=  H_\mu (\hat p)(\Omega)+ \int_{{ \Gamma_D}}{ H_\mu(-\hat u \odot \nu)\, d \Hs^{n-1}}.
\end{multline*}
We have thus estabished that for all $\mu>0$,
$$\liminf_{\lbd \rightarrow  \infty}{\left(  H(\hat p_\lbd)(\Omega) + \int_{\partial \Om}{\psi_\lbd(x,\hat u_\lbd)\, d \Hs^{n-1}} \right)}
\geq   H_\mu (\hat p)(\Omega) + \int_{{ \Gamma_D}}{H_\mu(-\hat u \odot \nu)\, d \Hs^{n-1}}.$$
We can now pass to the limit as $\mu \to \infty$ owing to the Monotone Convergence theorem to get that
$$\liminf_{\lbd \rightarrow  \infty}{\left(  H(\hat p_\lbd)(\Omega) + \int_{\partial \Om}{\psi_\lbd(x,\hat u_\lbd)\, d \Hs^{n-1}} \right)} 
\geq  H (\hat p)(\Omega) + \int_{{ \Gamma_D}}{H(-\hat u \odot \nu)\, d \Hs^{n-1}},$$
which leads to the desired lower bound.
\end{proof}
	
We are now in position to prove a lower bound energy inequality. Since for all $t \in [0,T]$, we have $\dot u_\lambda(t) \rightharpoonup \dot u(t)$ weakly in $L^2(\Om;\R^n)$ and $e_\lambda(t) \rightharpoonup e(t)$ weakly in $L^2(\Om;\mathbb M^n_{\rm sym})$, we get by weak lower semicontinuity of the norm that
$$\frac{1}{2} \int_{\Om}{|\dot{u}(t)|^2 \, dx} + \mathcal{Q}(e(t))\leq \liminf_{\lambda \to \infty} \left\{\frac{1}{2} \int_{\Om}{\abs{\dot{u}_\lbd (t)}^2\, dx} + \mathcal{Q}(e_\lbd(t))\right\}.$$
To pass to the lower limit in the remaining terms in the left-hand side of the energy inequality \eqref{eq:inqdifferenceenergies}, we consider a partition $0=t_0 \le t_1 \le \ldots \le t_N = t$ of the time interval $[0, t]$. By convexity of $H$ and $\psi_\lambda(x,\cdot)$, we infer from Jensen's inequality that
\begin{multline*}
\int_0^t    H(\dot{p}_\lbd(s))(\Omega) \, ds +   \int_0^t \int_{\partial \Om}    \psi_\lbd(x, \dot{u}_\lbd (s))\, d\Hs^{n-1}\, ds \\
\ge \sum_{i =1}^{N}  \left\{  H(p_\lbd(t_i) - p_\lbd(t_{i-1}) )(\Omega) + \int_{\partial \Om} \psi_\lbd(x, u_\lbd (t_{i})-u_\lbd (t_{i-1}))\, d \Hs^{n-1}\right\}.
\end{multline*}
Since, for all $0 \le i \le N$ we have that 
$$
\begin{cases}
u_\lambda(t_i) \rightharpoonup  u(t_i) & \text{ weakly in } L^2(\Om;\R^n),\\
u_\lambda(t_i) \rightharpoonup  u(t_i) & \text{ weakly* in } BD(\Om),\\
e_\lambda(t_i) \rightharpoonup e(t_i) & \text{ weakly in } L^2(\Om;\mathbb M^n_{\rm sym}),\\
p_\lambda(t_i) \rightharpoonup p(t_i) & \text{ weakly* in }\mathcal M(\Om;\mathbb M^n_{\rm sym}),
\end{cases}
$$
we can apply Proposition \ref{prop:relaxationdirichletpart} to get that
$$ \liminf_{\lbd \rightarrow \infty}\left(\int_0^t   H(\dot{p}_\lbd(s))(\Omega)\, ds +   \int_0^t \int_{\partial \Om}   \psi_\lbd(x, \dot{u}_\lbd)\, d\Hs^{n-1}\, ds\right) \ge \sum_{i =1}^{N}  H(p(t_i) - p(t_{i-1}) )(\Om \cup \Gamma_D),$$
where the measure $p(t)$ is extended to $\Gamma_D$ by setting 
$$p(t)\res\Gamma_D := -u(t)\odot\nu \mathcal H^{n-1}\res\Gamma_D.$$

Passing to the supremum with respect to all partitions, we deduce that
$$\mathcal V_{\mathcal H}(p;0,t):=\sup\left\{\sum_{i=1}^N H(p(t_i) - p(t_{i-1}) )(\Om \cup \Gamma_D) :\; 0=t_0 \leq t_1 \leq \cdot \leq t_N=t, \, N \in \mathbb N\right\}<\infty.$$
Using \cite[Theorem 7.1]{DMDSM}\footnote{Note that \cite[Theorem 7.1]{DMDSM} is stated for functions $H$ which are bounded from above, which is not our case here because $H$ is allowed to take the value $+\infty$.  However, a carefull inspection of the proof of \cite[Theorem 7.1]{DMDSM} shows the validy of this result in our case thanks to the additional property $\mathcal V_{\mathcal H}(p;0,t)<\infty$.}, we get that 
$$ \liminf_{\lbd \rightarrow \infty}\left(\int_0^t  H(\dot{p}_\lbd(s))(\Omega)\, ds +   \int_0^t \int_{\partial \Om}  \psi_\lbd(x, \dot{u}_\lbd)\, d\Hs^{n-1}\, ds\right) \ge \int_0^t   H(\dot p(s) )(\Om \cup \Gamma_D)\, ds.$$
Passing to the lower limit in \eqref{eq:inqdifferenceenergies} as $\lambda \to \infty$ yields
\begin{multline}\label{eq:ineq1}
\frac{1}{2} \int_{\Om}{|\dot{u} (t)|^2 \, dx} + \mathcal{Q}(e(t))+\int_0^t  H(\dot p(s) )(\Om \cup \Gamma_D) \, ds \\
\leq \frac{1}{2} \int_{\Om}{ \abs{v_0}^2 \, dx} + \mathcal Q(e_0) + \int_0^t \into{ f\cdot \dot{u}\, dx}\, ds.
\end{multline}
	
The proof of the other energy inequality relies on the convexity inequality proved in Section \ref{sec:duality}. Indeed, assuming one of the following assumptions:
\begin{itemize}
\item $\partial\Om=\Gamma_D$;
\item $\partial\Om=\Gamma_N$;
\item $n=2$ and $\Sigma$ is a finite set;
\item $n=3$ and $\mathbf K=K_D \oplus (\R\, {\rm Id})$, for some compact and convex set $K_D \subset \mathbb M^3_D$ containing $0$ in its interior;
\end{itemize}
we can appeal Proposition \ref{prop:an1}, Proposition \ref{prop:n=2} or Proposition \ref{prop:n=3}. Indeed, for a.e. $t \in [0,T]$, we have $(\dot u(t),\dot e(t),\dot p(t)) \in \mathcal A_0$,  $\sigma(t) \in \mathcal K \cap \mathcal S_0$ and $H(\dot p(t))$ is a finite measure (by \eqref{eq:ineq1}). As a consequence, for a.e. $t \in [0,T]$, the duality pairing $[\sigma(t) \colon \dot p(t)] \in \mathcal D'(\R^n)$ is well defined and it extends to a bounded Radon measure supported in $\overline \Omega$ with
\begin{equation}
H(\dot p(t)) \geq [\sigma(t) \colon \dot p(t)]\quad\text{in }\mathcal M(\R^n)\,. \label{ea:reverseinequality}
\end{equation}
	
Since the nonnegative measure $H(\dot p(t)) - [\sigma(t) \colon \dot p(t)]$ is compactly supported in $\overline\Om$, we can evaluate its mass by taking the test function $\varphi \equiv 1$ in Definition \ref{definition:dualityMixed}. We then obtain that for a.e. $t \in [0,T]$,
$$0 \leq H(\dot p(t))(\Om \cup \Gamma_D) +\int_\Omega  \sigma(t) : \dot e(t)\,  dx +  \int_\Omega \dot u(t) \cdot {\rm div} \sigma(t) \, dx.$$
Using the equation of motion and the regularity properties of $\dot u$ and $e$, we can integrate by parts respect to time and get that 
\begin{multline*}
0 \leq \int_0^t H(\dot p(s))(\Om \cup \Gamma_D) )\, ds+ \mathcal Q(e(t)) -\mathcal Q(e_0)\\
+\frac12 \int_\Om |\dot u(t)|^2 \, dx - \frac12 \int_\Om |v_0|^2 \, dx -\int_0^t \int_\Om f\cdot u\, dx\, ds.
\end{multline*}
Owing to the first energy inequality \eqref{eq:ineq1}, we deduce that the last expression is zero, which implies that the nonnegative measure $H(\dot p(t)) - [\sigma(t) \colon \dot p(t)]$ has zero mass in $\overline\Om$. This leads in turn that this measure vanishes in $\overline \Om$, in other words the flow rule $H(\dot p(t)) = [\sigma(t) \colon \dot p(t)]$ in $\mathcal M(\overline \Om)$ is satisfied. Finally, since $H(\dot p(t))$ is concentrated on $\Omega \cup \Gamma_D$, it follows that $[\sigma(t):\dot p(t)]$ vanishes on $\partial\Omega \setminus \Gamma_D$ and that the flow rule $H(\dot p(t))=[\sigma(t):\dot p(t)]$ holds in $\mathcal M(\Omega \cup \Gamma_D)$.

\subsection{Uniqueness}
	
Let $(u_1, e_1,p_1)$ and $(u_2,e_2,p_2)$ be two solutions given by Theorem \ref{thm:compactness}. Subtracting the equations of motion of each solution, we have
$$\ddot{u}_1 - \ddot{u}_2 - \Div(\sigma_1 - \sigma _2) = 0 \quad \textrm{in } L^2(0,T; L^2 (\Om; \R^n)).$$
Let us consider the test function $\varphi := {\bf 1}_{[0,t]} (\dot{u}_1 - \dot{u}_2) \in L^2(0,T; L^2(\Om; \R^n))$, we deduce
\begin{equation}
\label{eq:uniqueness1}
\intzt{\into{(\ddot{u}_1 - \ddot{u}_2):(\dot{u}_1 - \dot{u}_2)\,dx}\,ds} - \intzt{\into{ (\Div(\sigma_1 - \sigma _2)) \cdot (\dot{u}_1 - \dot{u}_2)\, dx }\, ds} = 0.
\end{equation}
Since $\ddot{u}_1 - \ddot{u}_2 \in L^2(0,T; L^2 (\Om; \R^n))$ and $\dot{u} _1 (0) = \dot{u} _2 (0) = v_0$, we infer that
\begin{equation}\label{eq:1406}
\intzt{\into{(\ddot{u}_1 (s) - \ddot{u}_2(s)):(\dot{u}_1 (s)- \dot{u}_2 (s))\,dx} \,ds} = \frac{\norm{\dot{u}_1 (t) - \dot{u}_2 (t) }^2_{L^2(\Om; \R^n)}  }{2}.
\end{equation}
We already know that, for a.e. $s \in [0,T]$, the distributions $[\sigma_1(s):\dot p_1(s)]$ and 	$[\sigma_2(s):\dot p_2(s)]$ belong to $\mathcal M(\Omega \cup \Gamma_D)$. Moreover, since $(\dot u_1(s),\dot e_1(s),\dot p_1(s)),\, (\dot u_2(s),\dot e_2(s),\dot p_2(s)) \in \mathcal A_0$, $\sigma_1(s), \,\sigma_2(s) \in \mathcal S_0 \cap \mathcal K$ and $H(\dot p_1(s)), \, H(\dot p_2(s))$ are finite measures we can appeal Propositions \ref{prop:an1}, \ref{prop:n=2} and \ref{prop:n=3} which state that  $[\sigma_2(t) \colon \dot p_1(s)]$ and $ [\sigma_1(s) \colon \dot p_2(s)]$ extend to bounded Radon measures supported in $\overline \Omega$ with
$$[\sigma_1 (s): \dot{p}_1 (s)] = {H} (\dot{p}_1 (s)) \ge [\sigma_2 (s): \dot{p}_1 (s)] \quad \text{ in }\mathcal M(\R^n),$$
and
$$ [\sigma_2 (s): \dot{p}_2 (s)]  = {H} (\dot{p}_2 (s)) \ge  [\sigma_1 (s): \dot{p}_2 (s)] \quad \text{ in }\mathcal M(\R^n).$$
As a consequence, the measure $ [(\sigma_1(s) - \sigma_2(s)) : (\dot{p}_1(s) - \dot{p}_2(s))] $ is nonnegative. Furthermore, by the definition of stress duality (see Definition \ref{definition:dualityMixed}  with the test function $\varphi\equiv 1$ and $g=0$), we infer that
\begin{eqnarray}
0 & \leq & \int_0^t  [(\sigma_1(s) - \sigma_2(s)) : (\dot{p}_1(s) - \dot{p}_2(s))](\overline\Omega)\nonumber\\
& = & -\intzt{\into{ (\sigma_1(s) - \sigma_2(s)) : (\dot{e}_1(s) - \dot{e}_2(s)) \, dx }\, ds}\nonumber\\
&&\qquad\qquad- \intzt{\into{ (\Div(\sigma_1 (s) - \sigma _2(s))) \cdot (\dot{u}_1(s) - \dot{u}_2(s)) \, dx }\, ds}\nonumber\\
& = & - \mathcal{Q}(e_1 (t) - e_2 (t) )- \intzt{\into{ (\Div(\sigma_1 (s) - \sigma _2(s))) \cdot (\dot{u}_1(s) - \dot{u}_2(s)) \, dx }\, ds} , \label{eq:uniqueness2}
\end{eqnarray}
where we have used the fact that $e_1 (0) = e_2(0) = e_0$. By \eqref{eq:uniqueness1}, \eqref{eq:1406} and \eqref{eq:uniqueness2}, we infer that
$$ \frac{\norm{\dot{u}_1 (t) - \dot{u}_2 (t) }^2_{L^2(\Om; \R^n)}  }{2} +  \mathcal{Q}(e_1 (t) - e_2 (t) ) \le 0.  $$
From the expression above, we infer that $e_1 = e_2$ and $\dot u_1=\dot u_2$.  Since, $u_1 (0) = u_2 (0) = u_0$, we conclude that $u_1 = u_2$, and by the kinematic compatibility $p_1 = p_2$. This concludes the proof of the uniqueness. In particular, by uniqueness of the limit, there is no need of extracting subsequences when passing to the limit as $\lambda \to \infty$. The proof of Theorem \ref{thm:compactness} is now complete.

\subsection*{Acknowledges} R. Llerena acknowledges support from the Austrian Science Fund (FWF) through projects P 29681 and TAI 293-N, and from BMBWF through the OeAD-WTZ project HR 08/2020. This work was supported by a public grant as part of the Investissement d'avenir project, reference ANR-11-LABX-0056-LMH, LabEx LMH. 
	
\nocite{*}
\bibliographystyle{amsplain}
\bibliography{reference1.bib}
	
\end{document}